\newtheorem{lemma}{Lemma}[section]
\newtheorem{theorem}[lemma]{Theorem}
\newtheorem{proposition}[lemma]{Proposition}
\theoremstyle{remark}
\newtheorem{rem}[lemma]{Remark}
\newtheorem{ex}[lemma]{Example}
\theoremstyle{definition}
\newtheorem*{notaz}{Notation}
\def\Mat{\mathop{\rm Mat}\nolimits}          
\def\GL{\mathop{\rm GL}\nolimits}  
\def\AGL{\mathop{\rm AGL}\nolimits}           
\def\char{{\rm char\,}}
\def\Ker{{\rm Ker\,}}
\def\blockdiag{\mathop{\rm diag}\nolimits}
\def\diag{\mathop{\rm diag}\nolimits}
\def\rk{\mathrm{rk}}
\def\Imm{\mathop{\rm Im}\,}
\def\Hom{\mathop{\rm Hom}}
\def\sp{\mathrm{Span}}
\def\antidiag{\mathop{\rm antidiag}\nolimits}
\def\s{\sharp}
\def\T{\mathop{\rm T}\nolimits}
\newcommand{\R}{\mathbb{R}}   
\newcommand{\Q}{\mathbb{Q}}  
\newcommand{\F}{\mathbb{F}}    
\newcommand{\A}{\mathcal{L}}    
 \newcommand{\C}{\mathbf{C}}    
 \newcommand{\J}{\mathbf{J}}    
\newcommand{\Tr}{\mathcal{T}}    
\newcommand{\B}{\mathcal{B}}   
 \newcommand{\Z}{\mathbf{Z}}   
\newcommand{\ZZ}{\mathbb{Z}}
\begin{document}

\title{More on regular subgroups of the affine group}

\author{M.A. Pellegrini}
\address{Dipartimento di Matematica e Fisica, Universit\`a Cattolica del Sacro Cuore, Via Musei 41,
I-25121 Brescia, Italy}
\email{marcoantonio.pellegrini@unicatt.it}

\author{M.C. Tamburini Bellani}
\address{Dipartimento di Matematica e Fisica, Universit\`a Cattolica del Sacro Cuore, Via Musei 41,
I-25121 Brescia, Italy}
\email{mariaclara.tamburini@unicatt.it}

\keywords{Regular subgroup; local algebra; Jordan canonical form}
\subjclass[2010]{15A21, 20B35, 16L99}
 
\begin{abstract}
This paper is a new contribution to the study of regular subgroups of the affine group $\AGL_n(\F)$,
for any field $\F$. In particular we associate to any partition $\lambda\neq (1^{n+1})$ of $n+1$ abelian regular subgroups in such a way that different partitions define non-conjugate subgroups.
Moreover,  we classify the regular subgroups of certain natural types for $n\leq 4$. Our classification is equivalent to the classification of split local algebras of dimension $n+1$ over $\F$.
Our methods, based on classical results of  linear algebra, are computer free.
\end{abstract}

\maketitle

\section{Introduction}

Let $\F$ be any field. 
We identify the affine group $\AGL_n(\F)$ with the subgroup of $\GL_{n+1}(\F)$ consisting of the matrices 
having  $(1,0,\ldots,0)^{\T}$ as first column. With this notation,  $\AGL_n(\F)$ acts on the right on the set
$\mathcal{A}=\left\{(1,v): v\in \F^n\right\}$ of affine points.
Clearly, there exists an epimorphism $\pi: \AGL_n(\F) \rightarrow \GL_n(\F)$ induced by the action
of $\AGL_n(\F)$ on $\F^n$.
A subgroup (or a subset) $R$ of $\AGL_n(\F)$ is called \emph{regular} if it 
acts regularly on $\mathcal{A}$, namely  if, for every $v\in \F^n$, there exists a unique
element in  $R$ having $(1,v)$ as first row.
Thus  $R$  is regular precisely when 
$\AGL_n(\F)=\widehat{\GL}_n(\F)\thinspace R$, with $\widehat{\GL}_n(\F)\cap R =\left\{I_{n+1}\right\}$, 
where $\widehat{\GL}_n(\F)$ denotes the stabilizer  of $(1,0,\ldots,0)$.

A subgroup $H$ of $\AGL_n(\F)$ is \emph{indecomposable} if there exists no decomposition of $\F^n$ as a direct sum of non-trivial $\pi(H)$-invariant subspaces.
Clearly, to investigate 
the structure of regular subgroups, the indecomposable ones are the most relevant, since the other ones are 
direct products of regular subgroups  in smaller dimensions. 
So, one has to expect very many regular subgroups when $n$ is big. 
Actually, in Section \ref{Sec:standard} we show how to construct at least one abelian regular subgroup,
called \emph{standard}, for each partition $\lambda\ne (1^{n+1})$ of $n+1$,
in such a way that different partitions produce non-conjugate subgroups. Several of them are indecomposable.

The structure and the number of  
conjugacy classes of regular subgroups depend on $\F$. For instance, if $\F$ has characteristic $p>0$, every regular subgroup is unipotent 
\cite[Theorem 3.2]{T}, i.e., all its elements satisfy $(t-I_{n+1})^{n+1}=0$. A unipotent group is conjugate to a 
subgroup
of the group of upper unitriangular matrices (see \cite{H}): in particular it has
a non-trivial center. By contrast to the case $p>0$,  
$\AGL_2(\R)$ contains $2^{|\R|}$ conjugacy classes of regular  subgroups with trivial center,
hence not unipotent (see  Example \ref{degio}). 
So, clearly, a classification
in full generality is not realistic. 

Since the center $Z(R)$ of a regular subgroup $R$ is unipotent  
(see Theorem \ref{lin->unip}(a)) if $Z(R)$  is  non-trivial
one may assume, up to conjugation,  that $R$ is contained  in the centralizer 
of a unipotent Jordan form (see Theorem \ref{Jordan form}). But even this condition is weak. 

Before introducing a stronger hypothesis, which allows to treat significant cases, we need some notation. We write every element $r$ of $R$ as
\begin{equation}\label{tauv}
r=\begin{pmatrix} 1 & v \\ 0 & \pi(r)  \end{pmatrix}
=\begin{pmatrix} 1 & v \\ 0 & \tau_R(v) \end{pmatrix}
=\begin{pmatrix} 1 & v \\ 0 & I_n+\delta_R(v)  \end{pmatrix}
=\mu_R(v),  
\end{equation}
where $\mu_R: \F^n\rightarrow \AGL_n(\F)$,  $\tau_R:\F^n\rightarrow \GL_n(\F)$
and $\delta_R:=\tau_R-\textrm{id}: \F^n\rightarrow \Mat_n(\F)$.

The hypothesis we introduce is that $\delta_R$ is linear.
First of all, if $R$ is abelian, then  $\delta_R$ is linear (see \cite{CDS}). 
Moreover, if $\delta_R$ is linear, then $R$ is unipotent by Theorem \ref{lin->unip}(b), but not necessarily abelian.
One further motivation for  this hypothesis is that  $\delta_R$
is linear if and only if  $\A=\F I_{n+1} +R$ is a split local subalgebra of $\Mat_{n+1}(\F)$.
Moreover, two regular subgroups $R_1$ and $R_2$, with $\delta_{R_i}$ linear,
are conjugate in $\AGL_n(\F)$
if and only if the corresponding  algebras $\A_1$ and $\A_2$ are isomorphic (see Section \ref{sec:alg}).
In particular, there is a bijection between conjugacy classes of abelian regular subgroups of  $\AGL_n(\F)$
and isomorphism classes of abelian split local algebras of dimension $n+1$ over $\F$. This fact was first observed in \cite{CDS}.
It was studied also in connection with other algebraic structures in  \cite{CCS2,CR,Ch,Ch2} and 
in \cite{CCS}, where the classification of nilpotent associative algebras given in \cite{DG} is relevant.

In Section \ref{low} we classify, up to conjugation,  certain types of regular subgroups 
$U$ of $\AGL_n(\F)$, for $n\le 4$, and the corresponding algebras.
More precisely, for $n=1$ the only regular subgroup is the translation subgroup $\Tr$, which is standard.
For $n=2,3$, we assume that $\delta_U$ is linear.
If $n=2$ all subgroups $U$, over any $\F$, are standard (Table \ref{Tab4}).
For $n=3$  the abelian regular subgroups  are described in Table \ref{Tab5}. 
The non abelian ones are determined in Lemmas \ref{cosquare} and  \ref{J2:3}: there are $|\F|$ conjugacy classes when $\char \F=2$, $|\F|+1$ otherwise (see also \cite{DG}). 
If $n=4$ we assume that $U$ is abelian. 
The conjugacy classes, when  $\F$ has no quadratic extensions, are shown in Tables \ref{Tab6} and \ref{Tab7}. 

In particular, by the reasons mentioned above, we obtain an independent classification of the  split local algebras of dimension $n\le 4$ over any field. We obtain also the classification of the commutative split local algebras of 
dimension $5$ over fields with no quadratic extensions. Actually, regular subgroups arise from matrix 
representations of these algebras.  
In the abelian case and using the hypothesis that $\F$ is algebraically closed, the same classification, for $n\le 6$, was obtained by Poonen \cite{P}, via commutative algebra.
Namely he presents the algebras as quotients of the polynomial ring $\F[t_1,\dots ,t_r]$, $r\ge 1$.
The two approaches are equivalent but, comparing our results with those of Poonen, we detected  an inaccuracy\footnote{See \cite{P2} for a revised version of \cite{P}.}. 
Namely for $n=5$ and $\char \F=2$, the two algebras defined, respectively, by $\F[x,y,z]/\langle x^2,y^2,xz, yz, xy+z^2\rangle$
and $\F[x,y,z]/\langle xy,xz,yz, x^2+y^2,x^2+z^2 \rangle$ are isomorphic (see Remark \ref{remPoo}).

Our methods, based on linear algebra, are computer independent.

\section{Some basic properties and  examples of regular subgroups}

\begin{lemma}
A  regular submonoid $R$ of $\AGL_n(\F)$ is a 
subgroup. 
\end{lemma}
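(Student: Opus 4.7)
The plan is to show that every element $r \in R$ has its inverse $r^{-1} \in \GL_{n+1}(\F)$ already inside $R$. Since $R$ is a submonoid, $I_{n+1} \in R$, and the regularity of the action identifies $I_{n+1}$ as the unique element of $R$ with first row $(1,0,\ldots,0)$. The map $\rho: R \to \mathcal{A}$ given by $s \mapsto (1,0,\ldots,0)\, s$ is therefore a bijection.

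Fix $r \in R$, and consider the subset $Rr = \{sr : s \in R\}$; it is contained in $R$ because $R$ is closed under multiplication. Writing $r = \begin{pmatrix} 1 & v_r \\ 0 & A_r \end{pmatrix}$, the matrix $A_r$ is invertible (as $r \in \GL_{n+1}(\F)$), so right multiplication by $r$ acts on $\mathcal{A}$ as the affine bijection $(1,w) \mapsto (1, wA_r+v_r)$ of $\F^n$. In particular, as $s$ varies over $R$ and $(1,0)s = \rho(s)$ ranges bijectively over $\mathcal{A}$, the first rows $(1,0)(sr) = \rho(s)\, r$ also range bijectively over $\mathcal{A}$.

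Hence the restriction $\rho|_{Rr} : Rr \to \mathcal{A}$ is a bijection. Since $\rho$ itself is a bijection on all of $R$ and $Rr \subseteq R$, this forces $Rr = R$. In particular $I_{n+1} \in Rr$, so there is some $s \in R$ with $sr = I_{n+1}$; that is, $s$ is a left inverse of $r$ in $\GL_{n+1}(\F)$, which must equal $r^{-1}$. Thus $r^{-1} \in R$, and $R$ is a subgroup.

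The only point requiring a little care is that $\F$, and hence $R$, may be infinite, so one cannot simply invoke ``an injective map of a finite set to itself is surjective''; the argument instead compares two bijections to $\mathcal{A}$, which works regardless of cardinality. Beyond that, there is no real obstacle, as the proof is a direct application of the definition of regularity combined with the invertibility of elements of $\AGL_n(\F) \subseteq \GL_{n+1}(\F)$.
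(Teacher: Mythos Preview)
Your proof is correct but proceeds differently from the paper. The paper gives an explicit construction: for $r=\mu_R(v)$ it writes down the element $\mu_R\bigl(-v\,\tau_R(v)^{-1}\bigr)\in R$ and checks by direct matrix multiplication that the first row of $\mu_R\bigl(-v\,\tau_R(v)^{-1}\bigr)\mu_R(v)$ is $(1,0,\dots,0)$; regularity then forces this product to equal $I_{n+1}$. Your argument instead shows abstractly that $Rr=R$ by comparing the bijection $\rho:R\to\mathcal{A}$ with its restriction to $Rr$, and only then extracts the existence of a left inverse in $R$. The paper's route has the advantage of being constructive---it names the inverse---and fits with the explicit $\mu_R,\tau_R$ notation used throughout; your route is coordinate-free and would work verbatim for any regular submonoid of a permutation group acting on a set, without reference to the affine block structure. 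Both hinge on the same point, namely that uniqueness in the regularity hypothesis pins down the element with trivial first row as the identity.
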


\begin{proof}
For any $v\in \F^n$, the first row of $\mu_R\left(-v\tau_R(v)^{-1}\right)\mu_R(v)$
is the same as the first row of $I_{n+1}$. 
From the regularity of $R$ it follows  
$\mu_R\left( -v\tau_R(v)^{-1}\right)\mu_R(v)= I_{n+1}$, whence $\mu_R(v)^{-1}=\mu_R\left(-v\tau_R(v)^{-1}\right)\in R$.
Thus $R$ is a subgroup.
\end{proof}

If  $\delta_R\in\Hom_\ZZ\left(\F^n,\Mat_n(\F)\right)$, i.e. $\delta_R$ is additive, direct calculation  gives that a 
regular subset $R$ of $\AGL_n(\F)$ 
containing $I_{n+1}$  is a submonoid, hence a subgroup, if and only if:
\begin{equation}\label{prodotto}
\delta_R(v\delta_R(w))=\delta_R(v)\delta_R(w),\ \quad  \textrm{ for all }  v,w\in \F^n.
\end{equation}
Also, given $v,w\in \F^n$, 
\begin{equation}\label{vivj}
\mu_R(v)\mu_R(w)=\mu_R(w)\mu_R(v) \quad\textrm{ if and only if }\quad v\delta_R(w)= w\delta_R(v).
 \end{equation}

For the next two theorems  we need a basic result, that we recall below for the reader's convenience.
A proof can be found in any text of  linear algebra.

\begin{lemma}\label{Smith} 
Let $g\in \GL_m(\F)$ have characteristic polynomial $\chi_g(t)=f_1(t)f_2(t)$. 
If $\left(f_1(t),f_2(t)\right)=1$, then $g$ is conjugate to $h=\diag(h_1,h_2)$, with $\chi_{h_i}(t)=f_i(t)$, $i=1,2$.
Also, $\C_{\Mat_m(\F)}(h)$ consists of matrices of the same block-diagonal form.
\end{lemma}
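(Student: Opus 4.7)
The plan is to view $V=\F^m$ as an $\F[t]$-module with $t$ acting as $g$, and to obtain the block decomposition from the primary decomposition associated with the factorization $\chi_g=f_1 f_2$. First I would invoke Cayley--Hamilton to get $f_1(g)f_2(g)=0$. Since $(f_1,f_2)=1$, B\'ezout gives polynomials $a,b\in\F[t]$ with $a(t)f_1(t)+b(t)f_2(t)=1$, so that for every $v\in V$
\[
v = v\,a(g)f_1(g) + v\,b(g)f_2(g) \in \Ker f_2(g) + \Ker f_1(g),
\]
while a vector in $\Ker f_1(g)\cap\Ker f_2(g)$ is killed by $a(g)f_1(g)+b(g)f_2(g)=I$, hence is zero. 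Setting $V_i=\Ker f_i(g)$, this yields $V=V_1\oplus V_2$.

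Next I would observe that each $V_i$ is $g$-invariant, because $g$ commutes with $f_i(g)$. Choosing bases of $V_1$ and $V_2$ and concatenating them, I obtain a basis of $V$ in which $g$ is represented by a block-diagonal matrix $h=\diag(h_1,h_2)$, and $g$ is conjugate to $h$ via the change-of-basis matrix. It remains to check that $\chi_{h_i}(t)=f_i(t)$. By construction $f_i(h_i)=0$, so the minimal polynomial of $h_i$ divides $f_i$, and consequently every irreducible factor of $\chi_{h_i}$ divides $f_i$. Hence $\gcd(\chi_{h_1},\chi_{h_2})=1$. Combined with $\chi_{h_1}\chi_{h_2}=\chi_h=f_1f_2$ and the fact that both sides are monic of the same degree, unique factorization forces $\chi_{h_i}=f_i$.

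For the centralizer statement, let $c\in\C_{\Mat_m(\F)}(h)$. Then $c$ commutes with every polynomial in $h$, in particular with $f_i(h)$, so $c$ stabilizes $\Ker f_i(h)=V_i$ for $i=1,2$. In the chosen basis this means exactly that $c$ is block-diagonal with blocks of the same sizes as $h_1$ and $h_2$.

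I don't expect a real obstacle here: the whole argument is the standard primary decomposition. The only point requiring a little care is the last step in identifying the characteristic polynomials of the blocks with $f_1$ and $f_2$ (and not merely with divisors of their powers), which is handled by the coprimality combined with a degree count.
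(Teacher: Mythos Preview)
Your argument is correct and is exactly the standard primary decomposition proof; the paper itself omits the proof, noting only that it ``can be found in any text of linear algebra,'' so there is nothing to compare against beyond confirming that you have supplied the expected standard argument.
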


\begin{lemma}\label{shape2} 
Let $z$ be an element of a regular subgroup $R$ of $\AGL_n(\F)$.
If $z$ is not unipotent then, up to conjugation of $R$ under $\AGL_n(\F)$, we may suppose that
\begin{equation}\label{shape3}
z=\begin{pmatrix}
1&w_1&0\\
0&A_1&0\\
0&0&A_2
\end{pmatrix},
\end{equation}
where $A_1$ is unipotent and $A_2$ does not have the eigenvalue $1$.
\end{lemma}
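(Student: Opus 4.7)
The plan is to write $z$ in its usual block form
\[
z=\begin{pmatrix} 1 & v \\ 0 & \pi(z) \end{pmatrix},
\]
and use two successive conjugations inside $\AGL_n(\F)$: first to block-diagonalise $\pi(z)$ into a unipotent part and a part without eigenvalue $1$, then to kill the ``tail'' piece of $v$ by a translation.

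First, since $z$ is not unipotent and its first eigenvalue is $1$, the matrix $\pi(z)\in\GL_n(\F)$ has characteristic polynomial that factors as $(t-1)^{k}f_2(t)$ with $f_2(1)\neq 0$ and $\bigl((t-1)^{k},f_2(t)\bigr)=1$. By Lemma \ref{Smith}, there exists $P\in\GL_n(\F)$ such that $P^{-1}\pi(z)P=\diag(A_1,A_2)$ with $A_1$ unipotent of size $k$ and $A_2$ of size $n-k$ not admitting $1$ as an eigenvalue. The matrix $\diag(1,P)$ lies in $\widehat{\GL}_n(\F)\subseteq\AGL_n(\F)$, so conjugating $R$ by $\diag(1,P)$ replaces $z$ by
\[
z'=\begin{pmatrix} 1 & w_1 & w_2 \\ 0 & A_1 & 0 \\ 0 & 0 & A_2 \end{pmatrix},
\]
where $(w_1,w_2)=vP$ with $w_1\in\F^{k}$ and $w_2\in\F^{n-k}$.

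Next, I would eliminate $w_2$ by conjugating with a translation $T=\mu_\Tr(u)\in\AGL_n(\F)$, where $u=(0,u_2)$. A direct block computation gives
\[
T^{-1}z'T=\begin{pmatrix} 1 & w_1 & w_2+u_2(I_{n-k}-A_2)\\ 0 & A_1 & 0\\ 0 & 0 & A_2\end{pmatrix}.
\]
The crucial point is that $I_{n-k}-A_2$ is invertible, since $A_2$ does not have $1$ as an eigenvalue. Hence the unique choice $u_2=-w_2(I_{n-k}-A_2)^{-1}$ produces the claimed shape \eqref{shape3} (with $w_1$ unchanged).

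There is no real obstacle beyond bookkeeping: the two conjugating elements $\diag(1,P)$ and $T$ belong to $\AGL_n(\F)$, so after replacing $R$ by their composite conjugate we obtain a regular subgroup (still denoted $R$) in which the image of $z$ has the required form. The only point worth highlighting is the invertibility of $I_{n-k}-A_2$, which is exactly what lets the translation $T$ absorb the off-block component $w_2$.
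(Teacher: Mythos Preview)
Your proof is correct and follows essentially the same two-step approach as the paper: first conjugate by $\diag(1,P)\in\widehat{\GL}_n(\F)$ to block-diagonalise $\pi(z)$ via Lemma~\ref{Smith}, then conjugate by a translation to kill $w_2$ using the invertibility of $I-A_2$. The only minor point is that your phrase ``its first eigenvalue is $1$'' refers to $z$, not to $\pi(z)$; the paper invokes \cite[Lemma~2.2]{T} to ensure that $\pi(z)$ itself has eigenvalue $1$ (so $k\ge 1$ and the block $A_1$ is genuinely present), whereas your argument as written does not exclude $k=0$ --- though the displayed form is still attained in that degenerate case.
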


\begin{proof}
Let $z=\begin{pmatrix} 1 & w \\ 0 & z_0 \end{pmatrix}$, where $z_0=I_{n}+\delta_R(w)$.
Since $z_0$  has the eigenvalue $1$ \cite[Lemma 2.2]{T}, 
the characteristic polynomial $\chi_{z_0}(t)$ of $z_0$ factorizes 
in $\F[t]$ as 
$$\chi_{z_0}(t)=(t-1)^{m_1}g(t), \quad g(1)\ne 0, \quad m_1\ge 1, \quad \deg g(t)=m_2\ge 1.$$
By Lemma \ref{Smith},
up to conjugation in $\widehat {\GL}_n(\F)$, we may set $z_0=\diag(A_1,A_2)$ with:
$$\chi_{A_1}(t)=(t-1)^{m_1},\quad \chi_{A_2}(t)=g(t).$$
Write $w=\left(w_1, w_2\right)$ with $w_1\in \F^{m_1}$ and 
$w_2\in \F^{m_2}$. Since $A_2$ does not have the eigenvalue $1$, the 
matrix $A_2-I_{m_2}$ is invertible. Conjugating by the translation
$\begin{pmatrix}
1 &  0 &w_2(A_2-I_{m_2})^{-1}\\
0&I_{m_1}&0\\
0 & 0 & I_{m_2}
\end{pmatrix}$ 
we may assume $w_2=0$, i.e.
$z=\begin{pmatrix}
1&w_1&0\\
0&A_1&0\\
0&0&A_2
\end{pmatrix}$.
\end{proof}

We observe that if $\delta_R\in\Hom_\F\left(\F^n,\Mat_n(\F)\right)$, i.e. $\delta_R$ is linear,
then there is a natural embedding of $R$ into a regular subgroup $\widehat R$  of 
$\AGL_n(\widehat \F)$ for any extension $\widehat \F$ of $\F$. Namely:
$$
\widehat R=\left\{\begin{pmatrix}
1&\hat v\\
0&I_n+\delta_{\widehat R}(\hat v)
\end{pmatrix}: \hat v\in \F^n\otimes_\F \widehat \F \right\} \leq \AGL_n(\widehat \F),$$
where $\delta_{\widehat R} \left(\sum_i \hat\lambda v_i\right)=\sum_i \hat \lambda_i \delta_R (v_i)$,  $\hat \lambda_i \in \widehat \F$. Clearly $\delta_{\widehat R}$ is linear.

We obtain the following consequences.

\begin{theorem}\label{lin->unip}
Let $R$  be regular subgroup of $\AGL_n(\F)$. Then the following holds:
\begin{itemize}
\item[(\rm{a})] the center $\Z(R)$ of  $R$ is unipotent;
\item[(\rm{b})] if $\delta_R$ is linear, then $R$ is unipotent.
\end{itemize}
\end{theorem}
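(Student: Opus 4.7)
For part (a), the plan is to suppose for contradiction that $z \in Z(R)$ is not unipotent and to invoke Lemmas \ref{shape2} and \ref{Smith}. After an $\AGL_n(\F)$-conjugation we may assume $z$ has the block form (\ref{shape3}) with $m_2 \ge 1$. The top-left $(m_1+1)\times(m_1+1)$ block $\begin{pmatrix} 1 & w_1 \\ 0 & A_1 \end{pmatrix}$ has characteristic polynomial $(t-1)^{m_1+1}$, while $A_2$ has characteristic polynomial $g(t)$ with $g(1)\ne 0$. These two factors are coprime, so Lemma \ref{Smith} forces $\C_{\Mat_{n+1}(\F)}(z)$ to consist of matrices block-diagonal in the same partition. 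Since $z$ is central in $R$, every $\mu_R(v) \in R$ lies in this centralizer; but for $v = (0,v_2)$ with $v_2 \in \F^{m_2}$ the first row $(1,0,v_2)$ of $\mu_R(v)$ is incompatible with block-diagonality unless $v_2=0$. Regularity allows $v_2$ to be arbitrary in $\F^{m_2}$, so $m_2 = 0$ and $z$ must be unipotent.

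For part (b), the plan is to attach to $R$ the subset $\mathfrak{m} := \{r - I_{n+1} : r \in R\}$ of $\Mat_{n+1}(\F)$ and show it is a nil ideal of the enveloping algebra $\A := \F I_{n+1} + \mathfrak{m}$. Linearity of $\delta_R$ makes $v \mapsto \mu_R(v) - I_{n+1}$ an $\F$-linear bijection $\F^n \to \mathfrak{m}$, so $\mathfrak{m}$ is an $\F$-subspace of dimension $n$. The identity $(r-I)(s-I) = (rs-I) - (r-I) - (s-I)$, valid because $rs \in R$, shows $\mathfrak{m}$ is multiplicatively closed. Hence $\A$ is an $(n+1)$-dimensional $\F$-subalgebra of $\Mat_{n+1}(\F)$ in which $\mathfrak{m}$ is a two-sided ideal of codimension one. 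Any element $\alpha I_{n+1} + x \in \A$ with $\alpha \ne 0$ and $x = \mu_R(v)-I_{n+1}$ equals $\alpha\, \mu_R(\alpha^{-1}v)$ by linearity and is therefore a unit of $\A$. Consequently $\mathfrak{m}$ is the unique maximal ideal of $\A$, so $\A$ is local and its only idempotents are $0$ and $I_{n+1}$.

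To finish, fix $x \in \mathfrak{m}$. Since $x$ has zero first column, $0$ is an eigenvalue, so the minimal polynomial factors as $m_x(t) = t^a f(t)$ with $a \ge 1$ and $f(0) \ne 0$. If $\deg f \ge 1$, then the Chinese Remainder Theorem gives $\F[x] \cong \F[t]/(t^a) \oplus \F[t]/(f(t))$, producing a non-trivial idempotent in $\F[x] \subseteq \A$, contradicting locality. Hence $f$ is a non-zero constant, $x$ is nilpotent, and $r = I_{n+1} + x$ is unipotent.

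The main obstacle is in part (b): one must recognise that linearity (as opposed to mere additivity) of $\delta_R$ is precisely what promotes $\mathfrak{m}$ from an additive set to an $\F$-subspace and thereby endows $\A$ with the local algebra structure. Once this identification is made, the CRT-idempotent argument is routine, and part (a) follows essentially from a careful bookkeeping of the centralizer-versus-regularity tension.
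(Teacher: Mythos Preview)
Your proof is correct. Part (a) is essentially the paper's argument, with the regularity-versus-centralizer tension spelled out in slightly more detail.

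Part (b), however, follows a genuinely different route. The paper does not touch the algebra $\A$ at this stage; instead it first extends scalars to the algebraic closure $\widehat\F$ (using linearity of $\delta_R$ to define $\widehat R$), then takes a non-unipotent $z\in R$, puts it in the shape \eqref{shape3}, picks an eigenvalue $\xi$ of $A_2-I_{m_2}$ with eigenvector $v$, and checks by direct computation that $\mu_R(0,v)\,\mu_R(-\xi^{-1}w_1,0)$ has the same first row as $\mu_R(-\xi^{-1}w_1,0)$; regularity forces $v=0$, a contradiction. Your approach instead builds the local algebra $\A=\F I_{n+1}+\mathfrak m$ directly and kills non-nilpotent elements via the CRT-idempotent trick. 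This is cleaner in that it avoids any field extension and works entirely inside $\Mat_{n+1}(\F)$; it also anticipates (indeed essentially reproves the forward direction of) Theorem~\ref{taulin}, which the paper only states later. The paper's argument, by contrast, is more hands-on and keeps the algebra machinery out of the picture until Section~\ref{sec:alg}. Both hinge on the same observation you flag: $\F$-linearity (not just additivity) is what makes $\mathfrak m$ an $\F$-subspace and hence $\A$ an $\F$-algebra.
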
 

\begin{proof}
(a) Our claim is clear if $\Z(R)=\{1\}$. So let $1\ne z\in \Z(R)$ and
assume, by contradiction, that $z$ is not unipotent. By Lemma \ref{shape2} up to conjugation $z$
has shape \eqref{shape3}. Lemma \ref{Smith} gives the contradiction that its centralizer  is not transitive on the 
affine vectors.
\smallskip

\noindent (b) Let $\widehat \F$ be the algebraic closure of $\F$. By what observed above, substituting $R$ with
$\widehat R$, if necessary, we may assume $\F$ algebraically closed.
By contradiction, suppose that $z\in R$ is not unipotent.
Up to conjugation we may suppose $z$ as in the statement of Lemma \ref{shape2}.
In the same notation, $\delta_R(w_1,0)=\diag(A_1-I_{m_1},A_2-I_{m_2})$.   
Let $\xi\in \F$ be an eigenvalue of $A_2-I_{m_2}$
and $0\neq v\in \F^{m_2}$ be a corresponding  eigenvector. 
We have  $\xi\ne 0$ and, by  linearity, 
$\delta_R(-\xi^{-1}w_1,0)=\diag\left(-\xi^{-1}\left(A_1-I_{m_1}\right),-\xi^{-1}\left(A_2-I_{m_2}\right)\right)$. 
It follows that the first row of the product $\mu_{R}(0,v)\mu_R(-\xi^{-1}w_1,0)$ is equal to the first row of the second 
factor $\mu_R(-\xi^{-1}w_1,0)$.
From the regularity of $R$ we get $\mu_{R}(0,v)=I_{n+1}$. In particular $v=0$, a 
contradiction. We conclude that  $R$ is unipotent.
\end{proof}

The following examples show how the linearity of $\delta_R$ seems to be necessary to manage a classification 
of the regular subgroups of $\AGL_n(\F)$, even when unipotent.

\begin{ex}\label{degio}
Let $\B$ be a basis of $\R$ over $\Q$. For every  subset $S$ of $\B$ denote by  $f_S:\B\to \R$ the function
such that $f_S(v)=v$ if $v\in S$, $f_S(v)=0$ otherwise. 
Let  $\widehat f_S$ be its extension by linearity to $\R$. In particular $\widehat f_S\in \Hom_\ZZ(\R,\R)$ and
$\Imm(\widehat f_S)=\sp_\R(S)$, the subspace
generated by $S$. 

Consider the regular subgroup $R_S$ of the affine group $\AGL_2(\R)$ defined by:
$$R_S=\left\{\begin{pmatrix}
1&x&y\\
0&e^{\widehat f_S(y)}&0\\
0&0&1
\end{pmatrix}: x,y\in \R\right\}.$$
The set of eigenvalues of the matrices in $R_S$ is  $\left\{e^r : r\in \sp_\R(S) \right\}$.
It follows that $S_1\ne S_2$ gives $R_{S_1}$ not conjugate to $R_{S_2}$ under $\GL_3(\R)$. A fortiori
$R_{S_1}$ and $R_{S_2}$ are not conjugate under $\AGL_2(\R)$.
Thus in $\AGL_2(\R)$ there are as many  non conjugate regular subgroups as possible, namely
$2^{|\R|}$.
\end{ex}

\begin{ex}\label{add}
Let $\F\neq \F_p$ be a field of characteristic $p>0$.
Then the set
$$R=\left \{ \begin{pmatrix}
          1 & x_1 & x_2 \\ 
          0 & 1   & x_1^p\\
          0 &  0 & 1
           \end{pmatrix} : x_1,x_2\in \F
\right\}$$
is a non abelian unipotent regular subgroup of $\AGL_2(\F)$ such that $\delta_R$ is not linear.
\end{ex}

\begin{ex}[Heged\H{u}s, \cite{H2000}] \label{Heg}
Let $n\ge 4$ and $\F=\F_p$ ($p$ odd). Take the  matrices 
$A=\diag(A_3,I_{n-4})$ and $J=\diag(J_3,I_{n-4})$ of $\GL_{n-1}(\F_p)$, where 
$A_3=\left(
\begin{smallmatrix}
1 & 2 & -2 \\ 0 & 1 & -2 \\ 0 & 0 & 1
\end{smallmatrix}\right)$
and $J_3=\left(\begin{smallmatrix}
0 & 0 & 1 \\ 0 & 1 & 0 \\ 1 & 0 & 0
\end{smallmatrix}\right)$.
Then $A$ has order $p$, its minimum polynomial has degree $3$ and $AJA^{\T}=J$.
The subset $R$ of order $p^4$, defined by
\begin{equation*}
R=\left\{\begin{pmatrix}
1& v & h+\frac{vJv^{\T}}{2}\\
0 & A^h & A^hJ v^{\T}\\
0& 0 &1
\end{pmatrix}: v\in {\F_p}^{n-1}, h\in \F_p \right\},
\end{equation*}
is a regular submonoid of $\AGL_n(\F_p)$, hence a regular subgroup.
Moreover $R\cap \Tr=\left\{1\right\}$.
Note that  $\delta_R$ is not linear: suppose the contrary and observe that the elements
$\mu_R(v_n)$ and $\mu_R(2v_n)$ correspond to $h=1$ and $h=2$ respectively ($v_n=(0,\ldots,0,1)$).
Thus, in order that $\delta_R(2v_n)= 2\delta_R(v_n)$ we should have $A^2-I_{n-1}=2A-2I_{n-1}$,
i.e. $(A-I_{n-1})^2=I_{n-1}$, against our choice of $A$.
\end{ex}

\section{Algebras and regular subgroups}\label{sec:alg}

In this section we highlight the connections between regular  subgroups and finite dimensional split local algebras 
over a field $\F$. 
To this purpose we recall that an $\F$-algebra $\A$ with $1$ is called \emph{split local} if 
$\A/ \J(\A)$ is isomorphic to $\F$, where $\J(\A)$ denotes the Jacobson radical of $\A$.
In particular $\A=\F\, 1 + \J(\A)$, where the set $\F\,1=\{\alpha 1_{\A}: \alpha\in \F\}$ is a 
subring of $\A$  contained in its centre $\Z(\A)$. Note that  $\A\setminus \J(\A)$ is the set $\A^\ast$ of the 
invertible elements of  $\A$. 
If $\J(\A)$, viewed as an $\F$-module, has finite dimension, we say that $\A$ is {\em finite dimensional}.

If $\psi$ is an isomorphism  between two local split $\F$-algebras $\A_1, \A_2$, 
then $\psi(\J(\A_1))=\J(\A_2)$ and $\psi(\alpha\,1_{\A_{1}})=\alpha 1_{\A_2}$ for all $\alpha \in \F$.

\begin{theorem} \label{th:Poo}
Let $\A$ be a  finite dimensional split local $\F$-algebra. In the above notation, set $n=\dim_\F(\J(\A))$. 
Then, with respect to the product in $\A$, the subset 
$$R=1 + \J(\A)=\{1+v: v \in \J(\A)\}$$
is a group, isomorphic to a regular subgroup of $\AGL_{n}(\F)$ for which $\delta_R$ is linear.
\end{theorem}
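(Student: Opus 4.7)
The plan is to realize $R$ via the right regular representation of $\A$ on itself. First I would check that $R$ is a group under the product of $\A$: since $\J(\A)$ is a two-sided ideal, $(1+v)(1+w)=1+(v+w+vw)\in R$, so $R$ is closed. Because $\A$ is split local we have $\A\setminus \J(\A)=\A^\ast$, so each $1+v$ is a unit of $\A$; moreover the isomorphism $\A/\J(\A)\cong \F$ forces $(1+v)^{-1}\equiv 1\pmod{\J(\A)}$, hence $(1+v)^{-1}\in R$. So $R$ is a subgroup of $\A^\ast$.

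Next I would fix the $\F$-basis $\B=(1,e_1,\dots,e_n)$ of $\A$ obtained by extending any basis $(e_1,\dots,e_n)$ of $\J(\A)$, and use coordinates with respect to $\B$ to identify $\A$ with $\F^{n+1}$ and $\J(\A)$ with $\F^n$. For $a\in \A$, let $\mu(a)\in \Mat_{n+1}(\F)$ be the matrix representing the right-multiplication map $x\mapsto xa$ on row vectors. Associativity of $\A$ gives $\mu(ab)=\mu(a)\mu(b)$, so $\mu$ is an $\F$-algebra homomorphism. For $a=1+v\in R$, the first row of $\mu(1+v)$ is the coordinate vector of $1\cdot(1+v)=1+v$, namely $(1,v)$, and the $(i+1)$-th row is the coordinate vector of $e_i(1+v)=e_i+e_iv$, which has $0$ in its first entry because $e_iv\in \J(\A)^2\subseteq \J(\A)$. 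Consequently
$$
\mu(1+v)=\begin{pmatrix} 1 & v \\ 0 & I_n+\delta(v)\end{pmatrix}\in \AGL_n(\F),
$$
where $\delta(v)\in \Mat_n(\F)$ is the matrix of the right multiplication $x\mapsto xv$ restricted to $\J(\A)$; bilinearity of the product in $\A$ makes $\delta$ an $\F$-linear map from $\F^n$ to $\Mat_n(\F)$.

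Finally I would verify injectivity of $\mu|_R$ and regularity of the image. Injectivity is immediate, since the first row of $\mu(1+v)$ already determines $v$. Regularity follows since, for any prescribed $v\in \F^n\cong \J(\A)$, the unique preimage in $R$ with first row $(1,v)$ is $1+v$. By construction $\delta_{\mu(R)}=\delta$ is linear, which completes the claim. The only nontrivial step is checking that $R$ is closed under inverses, and this is precisely where the split local hypothesis $\A\setminus \J(\A)=\A^\ast$ is used; once a basis of $\J(\A)$ is fixed, everything else is a direct computation with the matrix of right multiplication.
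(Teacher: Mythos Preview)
Your proof is correct and follows essentially the same approach as the paper: both realize $R$ inside $\AGL_n(\F)$ via the matrix of right multiplication with respect to a basis of $\J(\A)$. The only cosmetic difference is that you invoke the full right regular representation of $\A$ on itself (with basis $(1,e_1,\dots,e_n)$), so the homomorphism property $\mu(ab)=\mu(a)\mu(b)$ is automatic from associativity, whereas the paper writes down the map $1+v\mapsto\bigl(\begin{smallmatrix}1&v_\B\\0&I_n+\delta_R(v)\end{smallmatrix}\bigr)$ directly and then checks multiplicativity by an explicit computation.
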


\begin{proof}
Clearly $R$ is closed under multiplication. Moreover any $r\in R$ has an inverse in $\A$.
From $r-1\in \J(\A)$ we get $r^{-1}-1\in\J(\A)$, whence $r^{-1}\in R$.
We conclude that $R$ is a group. 
Consider the  map 
$R\to \AGL_{n}(\F)$ such that, for all $v\in \J(\A)$:
\begin{equation}\label{ISO}
1+v\mapsto \begin{pmatrix}
1& v_{\B}\\
0&I_n+\delta_R(v)
\end{pmatrix},
\end{equation}
where $v_{\B}$ and $\delta_R(v)$ are, respectively, the coordinate vector of $v$ and the matrix 
of the right multiplication by $v$ with respect to a fixed basis $\B$ 
of $\J(\A)$, viewed as $\F$-module. In particular, considering the right multiplication by $w$:
\begin{equation}\label{multiplication}
\left(vw\right)_{\B}=v_\B\delta_R(w),\quad \textrm{ for all } v,w\in \J(\A).
\end{equation}
The map \eqref{ISO} is injective and we claim that it is a group monomorphism.

Set  $\delta=\delta_R$ for simplicity. For all $v,w\in \J(\A)$ we have $\delta (vw)=\delta(v)\delta(w)$
 by the associativity law and $\delta (v+w)=\delta(v)+\delta(w)$ 
by the distributive laws.
Now:
$$(1+v)(1+w)=1+v+w+vw\ \mapsto \ \begin{pmatrix}
1&v_{\B}+w_{\B}+ \left(vw\right)_{\B}\\
0&I_n+\delta(v)+\delta(w)+\delta (vw)
\end{pmatrix}.$$
On the other hand, considering the images of $1+v$ and $1+w$:
$$\begin{pmatrix}
1& v_{\B}\\
0&I_n+\delta(v)
\end{pmatrix}\begin{pmatrix}
1& w_{\B}\\
0&I_n+\delta(w)
\end{pmatrix}=\begin{pmatrix}
1& w_{\B}+v_{\B}+v_{\B}\delta(w)\\
0&I_n+\delta(v)+\delta(w)+\delta(v)\delta(w)
\end{pmatrix}.$$
Thus \eqref{ISO} is a homomorphism if and only if \eqref{multiplication} holds. 
This proves our claim.
\end{proof}

Notice that Theorem \ref{th:Poo} shows how to construct a regular subgroup starting from the presentation of a split 
local algebra.

\begin{ex}\label{Poon}
As in \cite{P}, consider the split local algebra 
$$\A= \frac{\F[t_1,t_2,t_3]}{\langle {t_1}^2+{t_2}^2, {t_1}^2+{t_3}^2, t_1 t_2, t_1t_3, t_2t_3\rangle }.$$
In this case $\J(\A)\cong \F^4$ has a basis given by $\{t_1, {t_1}^2, t_2,t_3\}$ and considering the right 
multiplication by an 
element of this basis we get $\delta(t_1)=E_{1,2}$, $\delta({t_1}^2)=0$,
$\delta(t_2)=-E_{3,2}$, $\delta(t_3)=-E_{4,2}$, 
 and so the associated regular subgroup is
$$R=\left\{
\begin{pmatrix}
 1 & x_1 & x_2 & x_3 & x_4 \\ 
  0 & 1 & x_1 &0& 0 \\ 
 0 & 0 & 1 & 0 & 0 \\ 
 0 & 0& -x_3 &1 & 0  \\ 
0 & 0 &-x_4 & 0  & 1
  \end{pmatrix}: x_1,x_2,x_3,x_4\in \F
\right\}.$$ 
\end{ex}

Conversely we have the following result. 

\begin{theorem}\label{taulin}
Let  $R=\mu_R(\F^n)$ be a regular subgroup of $\AGL_n(\F)$.
Set $$V=R-I_{n+1}= \left\{\begin{pmatrix} 0& v\\ 0 & \delta_R(v)   \end{pmatrix}\mid  v\in 
\F^n\right\}\quad \textrm{ and } \quad\A_R=\F I_{n+1}+ V.$$
\begin{itemize}
\item[({\rm a})] If $\delta_R$ is additive, then $V$ is a subring (without identity) of $\Mat_{n+1}(\F)$;
\item[({\rm b})] the function $\delta_R$ is linear if and only if  $\A_R$ is a split local subalgebra of 
$\Mat_{n+1}(\F)$
with $\J(\A_R)=V$.
\end{itemize}
\end{theorem}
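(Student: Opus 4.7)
For (a), with $\delta_R$ additive, closure of $V$ under addition is immediate from additivity applied to the lower-right blocks. For multiplication,
\[
\begin{pmatrix} 0 & v \\ 0 & \delta_R(v) \end{pmatrix}\begin{pmatrix} 0 & w \\ 0 & \delta_R(w) \end{pmatrix}
=\begin{pmatrix} 0 & v\delta_R(w) \\ 0 & \delta_R(v)\delta_R(w) \end{pmatrix},
\]
which lies in $V$ exactly when $\delta_R(v\delta_R(w))=\delta_R(v)\delta_R(w)$. This is identity \eqref{prodotto}, which holds since $R$ is a group and $\delta_R$ is additive.

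For the forward direction of (b), assume $\delta_R$ is linear. Then by (a) together with $\F$-homogeneity, $V$ is an $\F$-subalgebra (without identity) of $\Mat_{n+1}(\F)$, so $\A_R=\F I_{n+1}+V$ is an $\F$-subalgebra with identity $I_{n+1}$, and $V$ is a two-sided ideal. Since every element of $V$ has zero first column, $V\cap\F I_{n+1}=\{0\}$, hence $\A_R/V\cong\F$. By Theorem \ref{lin->unip}(b), $R$ is unipotent, so every $v\in V$ satisfies $v^{n+1}=0$; thus $V$ is a nil (and, being finite-dimensional, nilpotent) ideal, giving $V\subseteq\J(\A_R)$. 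Since $\J(\A_R)/V$ is a proper ideal of the field $\A_R/V$, it vanishes, so $\J(\A_R)=V$ and $\A_R$ is split local. I expect the identification of $\J(\A_R)$ with $V$, combining the nil-ideal and maximal-ideal arguments, to be the most delicate step.

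For the reverse direction, suppose $\A_R$ is an $\F$-subalgebra of $\Mat_{n+1}(\F)$ with $\J(\A_R)=V$. Being a subalgebra forces $\A_R$, and hence $V$ (cut out by vanishing of the first column), to be closed under $\F$-linear combinations. For $v,w\in\F^n$ and $\alpha,\beta\in\F$,
\[
\alpha\begin{pmatrix} 0 & v \\ 0 & \delta_R(v) \end{pmatrix}+\beta\begin{pmatrix} 0 & w \\ 0 & \delta_R(w) \end{pmatrix}=\begin{pmatrix} 0 & \alpha v+\beta w \\ 0 & \alpha\delta_R(v)+\beta\delta_R(w) \end{pmatrix}
\]
lies in $V$ and so must coincide with $\begin{pmatrix} 0 & \alpha v+\beta w \\ 0 & \delta_R(\alpha v+\beta w) \end{pmatrix}$, the unique element of $V$ with first row $\alpha v+\beta w$. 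Comparing lower-right blocks yields $\delta_R(\alpha v+\beta w)=\alpha\delta_R(v)+\beta\delta_R(w)$, proving linearity.
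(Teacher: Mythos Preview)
Your proof is correct and tracks the paper's argument closely for part (a) and for the reverse direction of (b). The one substantive difference is in the forward direction of (b), where you identify $V$ with $\J(\A_R)$ by invoking Theorem~\ref{lin->unip}(b) to conclude that every element of $V$ is nilpotent, hence $V$ is a nil ideal contained in $\J(\A_R)$, and then use $\A_R/V\cong\F$ to force equality. The paper instead observes directly that linearity gives $\alpha I_{n+1}+(\mu_R(\alpha v)-I_{n+1})=\alpha\,\mu_R(v)$, so $\A_R\setminus V=\F^\ast R$ consists entirely of units of $\Mat_{n+1}(\F)$, which immediately makes $V$ the unique maximal ideal. Both arguments are valid; the paper's is self-contained and avoids appealing to the earlier unipotency theorem, while yours makes efficient reuse of that result.
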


\begin{proof}
 Set $\mu=\mu_R$, $\delta=\delta_R$, $I=I_{n+1}$ and $\A=\A_R$.

\noindent (a) Since $\delta$ is additive, $V$ is an additive subgroup.
From $(\mu(v)-I)(\mu(w)-I)= \left(\mu(v)\mu(w)-I\right)-\left(\mu(v)-I\right)-\left(\mu(w)-I\right)$
it follows that $V$ is a subring.
\smallskip

\noindent (b) Suppose first that $\delta$ is linear. Using (a) we have that $\A$  is an additive subgroup.
By the linearity, $(\F I) V=V(\F I)=V$. It follows that  $\A \A=\A$,
hence $\A$ is a subalgebra of $\Mat_{n+1}(\F)$. Again linearity gives
$\alpha I+ \left(\mu(\alpha v)-I\right)=\alpha \mu(v)$ for all $\alpha\in \F$, $v\in \F^n$.
Thus $\A\setminus V=\F^\ast R$ consists of elements with inverse in $\A$.
We conclude that  $V=\J(\A)$ and $\A$ is a split local $\F$-algebra. 

Vice-versa, let $\A$ be a split local subalgebra with $\J(\A)=V$. In particular
$V$ is an additive subgroup, whence  $\delta(v+w)=\delta(v)+\delta(w)$ for all $v,w\in \F^n$.
Since $V$ is an ideal we get $\delta(\alpha v)=\alpha \delta(v)$ for all $\alpha\in \F$, $v\in \F^n$.
\end{proof}

Our classification of the  regular subgroups of $\AGL_n(\F)$ is based on the following proposition (see \cite[Theorem 
1]{CDS}),
where $\widehat{\GL}_n(\F)$ is defined as in the Introduction.

\begin{proposition}\label{prop:algebre}
Assume that $R_1,R_2$ are regular subgroups of $\AGL_n(\F)$ such that $\delta_{R_1}$ and $\delta_{R_2}$ are
linear maps.
Then the following conditions are equivalent:
\begin{itemize}
\item[(\rm{a})] $R_1$ and $R_2$ are conjugate in $\widehat{\GL}_n(\F)$;
\item[(\rm{b})] $R_1$ and $R_2$ are conjugate in $\AGL_n(\F)$;
\item[(\rm{c})] the algebras $\A_{R_1}$ and $\A_{R_2}$  are isomorphic.
\end{itemize}
\end{proposition}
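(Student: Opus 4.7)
The plan is to prove (a) $\Rightarrow$ (b) $\Rightarrow$ (c) $\Rightarrow$ (a). The first implication is immediate because $\widehat{\GL}_n(\F)$ is a subgroup of $\AGL_n(\F)$. For (b) $\Rightarrow$ (c), if $R_2 = g^{-1} R_1 g$ with $g \in \AGL_n(\F)$, then from $\A_{R_i} = \F I_{n+1} + (R_i - I_{n+1})$ one reads off $\A_{R_2} = g^{-1} \A_{R_1} g$, and conjugation by $g$ inside $\Mat_{n+1}(\F)$ restricts to an $\F$-algebra isomorphism between these subalgebras (which are indeed algebras by Theorem \ref{taulin}(b)).

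The real content is (c) $\Rightarrow$ (a). The strategy is to view $\F^{n+1}$ (as row vectors) as a right $\A_{R_i}$-module via matrix multiplication, and to transport an abstract $\F$-algebra isomorphism $\psi : \A_{R_1} \to \A_{R_2}$ into a simultaneous change of basis. The central observation is that $e_0 = (1, 0, \ldots, 0)$ is a cyclic generator with trivial annihilator for each of these modules. Indeed $e_0 \cdot \mu_{R_i}(v) = (1, v)$, so by regularity of $R_i$ the orbit $e_0 \cdot R_i$ already sweeps out all affine points and thus, together with $e_0$ itself, spans $\F^{n+1}$; conversely, a general element $\alpha I_{n+1} + (\mu_{R_i}(v) - I_{n+1})$ of $\A_{R_i}$ sends $e_0$ to $(\alpha, v)$, which vanishes only when $\alpha = 0$ and $v = 0$.

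I would then define $\phi : \F^{n+1} \to \F^{n+1}$ by $\phi(e_0 \cdot a) = e_0 \cdot \psi(a)$ for $a \in \A_{R_1}$. Well-definedness and injectivity follow from the trivial annihilator, surjectivity from cyclicity applied to $\A_{R_2}$, and $\F$-linearity from that of $\psi$. By construction $\phi(v \cdot a) = \phi(v) \cdot \psi(a)$ for all $v \in \F^{n+1}$ and all $a \in \A_{R_1}$. Writing $\phi$ as right multiplication by an invertible matrix $g \in \GL_{n+1}(\F)$, this identity becomes $a g = g \psi(a)$, i.e.\ $\psi(a) = g^{-1} a g$ for every $a \in \A_{R_1}$. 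Since $\phi(e_0) = e_0 \cdot \psi(I_{n+1}) = e_0$, the first row of $g$ equals $e_0$, so $g \in \widehat{\GL}_n(\F)$. As $\psi$ fixes $I_{n+1}$ and carries $\J(\A_{R_1})$ onto $\J(\A_{R_2})$, it sends $R_1 = I_{n+1} + \J(\A_{R_1})$ onto $R_2$, yielding $R_2 = g^{-1} R_1 g$.

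The main obstacle is precisely the passage from an abstract $\F$-algebra isomorphism to an inner one in $\Mat_{n+1}(\F)$; everything else amounts to bookkeeping. The cyclicity of $e_0$ and the triviality of its annihilator are the defining property of regular subgroups in disguise, so the whole argument hinges on exploiting regularity at this single distinguished vector.
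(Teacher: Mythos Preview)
Your argument is essentially correct and takes a genuinely different, more module-theoretic route than the paper. The paper works directly with the radical: it restricts $\psi$ to $\J(\A_{R_1})\to\J(\A_{R_2})$, identifies each radical with $\F^n$ via the first-row coordinate, obtains a matrix $P\in\GL_n(\F)$, and then checks from multiplicativity of $\psi$ that $\delta_2(vP)=P^{-1}\delta_1(v)P$, so $\diag(1,P)\in\widehat{\GL}_n(\F)$ conjugates $R_1$ to $R_2$. Your approach instead exploits that $e_0$ is a cyclic, faithful generator for $\F^{n+1}$ as a right $\A_{R_i}$-module, and transports $\psi$ to an inner automorphism of $\Mat_{n+1}(\F)$ via the associated module map. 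This is more conceptual and explains \emph{why} the abstract isomorphism becomes inner; the paper's computation is more direct but less illuminating. Unwinding your $g$ one recovers exactly the paper's $\diag(1,P)$.

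There is one small gap you should close. From $\phi(e_0)=e_0$ you deduce that the \emph{first row} of $g$ is $e_0$, and then assert $g\in\widehat{\GL}_n(\F)$. But in the paper's conventions $\widehat{\GL}_n(\F)$ is the stabilizer of $(1,0,\dots,0)$ \emph{inside} $\AGL_n(\F)$, so you also need the first \emph{column} of $g$ to be $e_0^{\T}$. This follows easily: for every $a\in R_1$ one has $a\,e_0^{\T}=e_0^{\T}$, and the relation $ag=g\psi(a)$ gives $a(g e_0^{\T})=g\,\psi(a)e_0^{\T}=g e_0^{\T}$; since $\{w:aw=w\text{ for all }a\in R_1\}=\F e_0^{\T}$ (the condition $v\cdot w'=0$ for all $v\in\F^n$ forces the last $n$ coordinates of $w$ to vanish), we get $g e_0^{\T}=\lambda e_0^{\T}$, and comparing with $e_0 g=e_0$ at the $(1,1)$ entry yields $\lambda=1$. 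With this addition your proof is complete.
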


\begin{proof}
Set $\delta_1=\delta_{R_1}$, $\A_1=\A_{R_1}$, $\delta_2=\delta_{R_2}$, $\A_2=\A_{R_2}$, $\mu_1=\mu_{R_1}$ and 
$I=I_{n+1}$.

\noindent $(a)\Longrightarrow (b) \Longrightarrow (c)$ is  clear.
Let us prove $(c) \Longrightarrow (a)$. By Theorem \ref{taulin}, $\A_1$ 
and $\A_2$ are split local algebras with Jacobson radicals $R_1-I$ and $R_2-I$, respectively.
Suppose that $\psi: \A_1\to \A_2$ is an algebra isomorphism.
In particular $\psi(R_1-I)=R_2-I$ and $\psi$ induces an $\F$-automorphism  of $\F^n$.
Let $P\in \GL_n(\F)$ be the matrix of this automorphism with respect to the canonical basis of $\F^n$.
Then 
$$\psi\begin{pmatrix} 0 & v \\ 0 &\delta_1(v)   \end{pmatrix}=
\begin{pmatrix} 0 & vP \\ 0 & \delta_2(vP)   \end{pmatrix} .$$
For all $v,w\in \F^n$ we have 
$\psi(\mu_1(w) \mu_1(v))=\psi(\mu_1(w))\psi(\mu_1(v))$. This implies
$\delta_1(v)P=P\delta_2(vP)$, whence
$\delta_2(vP)=P^{-1}\delta_1(v)P$  for all $v\in \F^n$.
We conclude: 
$$\psi\begin{pmatrix} 0& v \\ 0 & \delta_1(v)   \end{pmatrix}=
\begin{pmatrix}  0& vP \\ 0 & \delta_2(vP)   \end{pmatrix}= 
\begin{pmatrix}
1&0\\
0&P\end{pmatrix}^{-1}\begin{pmatrix} 0& v \\ 0 & \delta_1(v)   \end{pmatrix}\begin{pmatrix}
1&0\\
0&P\end{pmatrix}.$$
\end{proof}

\section{Centralizers of unipotent elements}\label{Sec:unip}

Our classification of unipotent regular subgroups of $\AGL_n(\F)$ is connected to the
classical theory of canonical forms of matrices. For the reader's convenience we recall the relevant facts.
For all $m\ge 2$, the conjugacy classes of unipotent elements in $\GL_{m}(\F)$ are parametrized by the Jordan canonical 
forms
\begin{equation}\label{J}
J=\blockdiag(J_{m_1}, \dots, J_{m_k}), \quad m=\sum m_i,
\end{equation}
where each $J_{m_i}$ is a Jordan block of size $m_i$, namely a
matrix having all $1$'s on the main diagonal and the diagonal above it,
and $0$'s elsewhere. 
A  Jordan block $J_{m}$ has minimal polynomial $(t-1)^m$ and its 
centralizer is the $m$-dimensional algebra 
\begin{equation}\label{Tmm}
T_{m,m}(\F):=\left\{
\left(
\begin{array}{cccccc}
x_0 & x_1  & \ldots & x_{m-2} &x_{m-1}\\ 
0 & x_0 &  \dots&  x_{m-3} & x_{m-2}\\
 &    & \ddots & & \vdots\\
0 & 0&  \ldots & x_0 & x_1\\
0 & 0&  \ldots & 0 & x_0
\end{array}\right): x_i\in \F\right\}
\end{equation}
generated by  $J_m$.
To study the centralizer of $J$ in \eqref{J} we write  $c\in \Mat_m(\F)$ as:
\begin{equation}\label{C}
c= 
\begin{pmatrix}
C_{1,1}&\dots &C_{1,k}\\
\vdots &\ddots&\vdots\\
C_{k,1}&\dots &C_{k,k}
\end{pmatrix},\ C_{i,j}\in \Mat_{m_i,m_j}(\F).
\end{equation}

Clearly $c$  centralizes $J$ if and only if
\begin{equation}\label{centra}
J_{m_i}C_{i,j}= C_{i,j}J_{m_j},\quad \textrm{for all } i,j.
\end{equation}

\begin{lemma}\label{centralizzatore} 
Take  $J$ as in \eqref{J} and assume further that 
$m_1\ge \dots \ge m_k$. 
Then the element
$c$ above centralizes $J$ if and only if   
$$C_{i,j}\in T_{m_i,m_j}(\F),\quad \textrm{for all } i,j,$$
where each $T_{m_i,m_i}(\F)$ is defined as in \eqref{Tmm} with $m=m_i$ and, for  $m_i> m_j$:
$$
T_{m_i,m_j}(\F):= \begin{pmatrix}
T_{m_j,m_j}(\F)\\
\noalign{\medskip}
 0\\
\vdots\\
 0
\end{pmatrix},\quad T_{m_j,m_i}(\F):= \begin{pmatrix}
0&\dots & 0&T_{m_j,m_j}(\F)
\end{pmatrix}.
$$
\end{lemma}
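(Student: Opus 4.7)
The plan is to reduce the centralizer equation \eqref{centra} to an intertwining problem for nilpotent shifts, solve it entrywise, and then read off the three block forms in the statement.

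Step 1 (reduction): write $J_{m_i}=I_{m_i}+N_{m_i}$, where $N_m \in \Mat_m(\F)$ is the matrix with $1$'s on the superdiagonal and zeros elsewhere. Subtracting $I C_{i,j} = C_{i,j} I$ from \eqref{centra}, that equation is equivalent to
\[
N_{m_i}\,C_{i,j} \;=\; C_{i,j}\, N_{m_j}, \qquad i,j=1,\ldots,k.
\]
It therefore suffices, for arbitrary $a,b\ge 1$, to describe the $\F$-space of matrices $C=(c_{r,s})\in\Mat_{a,b}(\F)$ satisfying $N_a C = C N_b$.

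Step 2 (entrywise analysis): a direct calculation gives
\[
(N_a C)_{r,s} = \begin{cases} c_{r+1,s}, & r<a,\\ 0, & r=a,\end{cases}
\qquad
(CN_b)_{r,s} = \begin{cases} c_{r,s-1}, & s>1,\\ 0, & s=1.\end{cases}
\]
Equating the two sides produces three families of conditions: (i) $c_{r+1,s}=c_{r,s-1}$ for $1\le r<a$ and $1<s\le b$; (ii) $c_{r,1}=0$ for $2\le r\le a$; (iii) $c_{a,s}=0$ for $1\le s\le b-1$. By (i) the entry $c_{r,s}$ depends only on the difference $d:=s-r$; call its common value on the diagonal $\{s-r=d\}$ by $\alpha_d$. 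Then (ii) forces $\alpha_d=0$ for $d\in\{-1,-2,\ldots,-(a-1)\}$, and (iii) forces $\alpha_d=0$ for $d\in\{1-a,2-a,\ldots,b-1-a\}$.

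Step 3 (three cases). If $a=b=m$, condition (iii) is contained in (ii); the free parameters are $\alpha_0,\ldots,\alpha_{m-1}$, giving the upper triangular Toeplitz algebra $T_{m,m}(\F)$. If $a>b$, applied with $a=m_i$, $b=m_j$, condition (iii) is again subsumed by (ii); the free parameters are $\alpha_0,\ldots,\alpha_{b-1}$, and since a diagonal $d\ge 0$ in an $a\times b$ matrix only meets rows $1,\ldots,b$, the bottom $a-b$ rows vanish and the top $b\times b$ block is upper triangular Toeplitz, matching the stated form of $T_{m_i,m_j}(\F)$. If $a<b$, applied with $a=m_j$, $b=m_i$, condition (iii) strictly extends (ii): it kills $\alpha_d$ for all $d\le b-1-a$, so the surviving parameters are $\alpha_{b-a},\ldots,\alpha_{b-1}$; the corresponding nonzero diagonals meet only the last $a$ columns, producing an upper triangular Toeplitz block flush right, i.e.\ $C\in T_{m_j,m_i}(\F)$.

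The content is essentially bookkeeping: the only place where one can slip is verifying that condition (iii) is redundant exactly when $a\ge b$ and that it contributes the additional vanishing of the diagonals $d\in\{0,1,\ldots,b-1-a\}$ when $a<b$, so that the surviving block ends up in the correct corner. I would double-check the three index ranges against the explicit block shapes in the statement to close the proof.
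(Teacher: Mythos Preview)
Your argument is correct. The paper actually states this lemma without proof, treating it as part of the ``classical theory of canonical forms of matrices'' recalled ``for the reader's convenience'' at the start of Section~\ref{Sec:unip}; there is no \texttt{proof} environment attached to Lemma~\ref{centralizzatore}. What you have written is exactly the standard derivation one would expect: reduce $J_{m_i}C_{i,j}=C_{i,j}J_{m_j}$ to $N_{m_i}C_{i,j}=C_{i,j}N_{m_j}$, read off the Toeplitz condition $c_{r+1,s}=c_{r,s-1}$ together with the boundary vanishing, and then sort out the three shapes according to the sign of $m_i-m_j$. The index bookkeeping in Step~3 is accurate, including the key observation that condition~(iii) is redundant precisely when $a\ge b$ and otherwise kills the extra diagonals $0,\ldots,b-1-a$, which is what pushes the surviving Toeplitz block to the right. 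One cosmetic remark: your final paragraph reads like a note-to-self rather than a proof closure; for a polished version just delete it and end after Step~3.
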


\begin{lemma}\label{Jacobson} 
Take $J$ as in \eqref{J}.
If $m_1<m_i$ for some $i\ge 2$, then the group
$\C_{\Mat_{m}(\F)}(J)\cap \AGL_{m-1}(\F)$ 
is not transitive 
on affine row vectors $(1,x_1,\dots, x_{m-1})$. 
\end{lemma}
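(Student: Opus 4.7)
The plan is to show that the first row of any $c\in \C_{\Mat_m(\F)}(J)\cap \AGL_{m-1}(\F)$ is forced to have some fixed coordinates equal to zero, so that the map $(1,0,\dots,0)\mapsto$ (first row of $c$) cannot surject onto all affine vectors $(1,x_1,\dots,x_{m-1})$. Write such a $c$ in the block form \eqref{C}, with $C_{a,b}\in \Mat_{m_a,m_b}(\F)$ and the diagonal blocks sized according to the Jordan blocks of $J$. The first row of $c$ is the concatenation of the first rows of $C_{1,1},\dots,C_{1,k}$, so it is enough to identify a forced zero in the first row of $C_{1,i}$, where $i\geq 2$ is the given index with $m_1<m_i$.

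The key step is to solve \eqref{centra} for the single block pair $(1,i)$: $J_{m_1}C_{1,i}=C_{1,i}J_{m_i}$, equivalently $N_{m_1}C_{1,i}=C_{1,i}N_{m_i}$ with $N_{m_s}=J_{m_s}-I_{m_s}$ the nilpotent shift. The very same elementary calculation that underlies Lemma \ref{centralizzatore} (shift rows up versus shift columns right) forces $C_{1,i}$ to be an upper-triangular Toeplitz matrix of rectangular shape, that is, $C_{1,i}\in T_{m_1,m_i}(\F)$. Because $m_1<m_i$, the description of this ``wide'' Toeplitz space given in Lemma \ref{centralizzatore} shows that the leftmost $m_i-m_1$ columns of $C_{1,i}$ vanish identically; in particular the first row of $C_{1,i}$ begins with exactly $m_i-m_1\geq 1$ zeros.

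Translating back, the $m_i-m_1$ coordinates of the first row of $c$ sitting in positions $m_1+\cdots+m_{i-1}+1,\dots,m_1+\cdots+m_{i-1}+(m_i-m_1)$ are forced to be zero for every admissible $c$. A general affine vector $(1,x_1,\dots,x_{m-1})$ can take arbitrary values at these positions, so the orbit of $(1,0,\dots,0)$ under $\C_{\Mat_m(\F)}(J)\cap \AGL_{m-1}(\F)$ cannot cover all affine vectors, which is what we wanted.

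The only obstacle is that Lemma \ref{centralizzatore} is formally stated under the ordering $m_1\geq \cdots\geq m_k$, which is not assumed here. This is resolved by observing that the shape of the block $C_{1,i}$ depends only on the pair $(m_1,m_i)$: the elementary solution of $N_{m_1}C=CN_{m_i}$ with $m_1<m_i$ produces the wide Toeplitz form $T_{m_1,m_i}(\F)$ independently of the position or relative size of the remaining blocks of $J$.
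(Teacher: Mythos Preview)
Your proof is correct and follows the same approach as the paper's: both decompose $c$ into blocks as in \eqref{C}, apply the intertwining relation \eqref{centra} to the block $C_{1,i}$, and conclude that its first row has forced zero entries, obstructing transitivity. Your account is slightly more precise (you note that only the leftmost $m_i-m_1$ entries vanish, and you explain why the ordering hypothesis in Lemma~\ref{centralizzatore} is immaterial), but the argument is the same.
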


\begin{proof}
Consider $c\in \C_{\Mat_{m}(\F)}(J)$ and decompose it as in \eqref{C}. Application of 
\eqref{centra} and elementary matrix calculation give that, whenever  $m_1<m_i$,  the first
row of the matrix $C_{1,i}$ must be the zero vector. Our claim follows immediately.
\end{proof}

\begin{lemma}\label{transitivity}
Let $J$ be as in \eqref{J}. If $k>1$, assume further that $m_1\ge m_i$
for all $i\ge 2$.
Then, for every $v\in \F^{m}$, there exists $c\in \C_{\Mat_{m}(\F)}(J)$ 
having $v$ as first row. If the first coordinate of $v$ is non zero,
such $c$ can be chosen nonsingular. 
\end{lemma}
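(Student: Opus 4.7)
The plan is to apply Lemma \ref{centralizzatore} directly: an element $c$ in the centralizer is determined block-by-block, with each block $C_{i,j}$ ranging freely over $T_{m_i,m_j}(\F)$. So I will build $c$ block by block, first making sure the first row of the block row $(C_{1,1},C_{1,2},\ldots,C_{1,k})$ equals $v$, and then filling in the remaining blocks to obtain invertibility when $v$ has a non-zero first coordinate.

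First I would read off the shape of the first row of each admissible first-row block. Split $v$ as $v=(v^{(1)},\ldots,v^{(k)})$ with $v^{(j)}\in\F^{m_j}$. The block $C_{1,1}\in T_{m_1,m_1}(\F)$ is an upper-triangular Toeplitz matrix, whose first row $(x_0,x_1,\ldots,x_{m_1-1})$ is a completely free vector of $\F^{m_1}$, so I can set it equal to $v^{(1)}$. For $j\geq2$, the assumption $m_1\geq m_j$ forces $C_{1,j}\in T_{m_1,m_j}(\F)$ to consist of a copy of $T_{m_j,m_j}(\F)$ sitting in the top $m_j$ rows (with zeros below if $m_1>m_j$), and the first row of such a Toeplitz block is once again an arbitrary element of $\F^{m_j}$. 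So I can prescribe the first row of $C_{1,j}$ to be $v^{(j)}$. This handles the existence part without any constraint on $v_1$.

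For the second claim, assume $v_1\neq 0$. Using the same $C_{1,j}$ as above, I would additionally choose, for $i\geq 2$, the diagonal block $C_{i,i}=v_1 I_{m_i}\in T_{m_i,m_i}(\F)$ and all off-diagonal blocks $C_{i,j}=0$ for $i\geq 2$, $j\neq i$ (these zero choices obviously belong to the corresponding $T_{m_i,m_j}(\F)$). The resulting $c$ is block upper-triangular with respect to the partition $m_1+\cdots+m_k$. Since $C_{1,1}$ is upper-triangular Toeplitz with diagonal entry $v_1$, its determinant equals $v_1^{m_1}$; similarly $\det C_{i,i}=v_1^{m_i}$ for $i\geq 2$. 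Therefore $\det c = v_1^{m_1+m_2+\cdots+m_k}=v_1^m\neq 0$, so $c\in\GL_m(\F)$.

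The only point that really requires care is matching the first row of each $C_{1,j}$ correctly, which hinges on the explicit description given in Lemma \ref{centralizzatore} together with the hypothesis $m_1\geq m_j$; this is precisely the condition that fails in Lemma \ref{Jacobson} and causes non-transitivity there, so the two lemmas form complementary halves of the same observation.
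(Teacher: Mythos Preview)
Your argument is correct and follows exactly the approach the paper indicates: the paper's own proof consists of a single sentence pointing to Lemma~\ref{centralizzatore} and the description of $T_{m,m}(\F)$, and you have simply made that reference explicit by writing down the blocks $C_{1,j}$ and the diagonal choices $C_{i,i}=v_1 I_{m_i}$. Nothing more is needed.
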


\begin{proof}
Both claims are direct consequences of the description of $T_{m,m}(\F)$ preceding \eqref{Tmm}
and Lemma \ref{centralizzatore}. 
\end{proof}

\begin{theorem}\label{Jordan form}
Let $R$ be a  regular subgroup of $\AGL_n(\F)$
and $1\ne z$ be an element of the center $\Z(R)$ of $R$. Then, up to conjugation of $R$ under $\AGL_n(\F)$,
we may suppose that $z=J_z$, where $J_z= \blockdiag\left(J_{n_1}, \dots, J_{n_k}\right)$ 
is the Jordan form of $z$ having Jordan blocks of respective sizes  $n_i\ge n_{i+1}$ for all $i\ge 1$.
\end{theorem}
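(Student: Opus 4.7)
The plan is to reduce the theorem to a single linear-algebraic claim: that $e_1 \in \Imm (z-I_{n+1})^{n_1-1}$, where $n_1$ is the size of the largest Jordan block of $z$. By Theorem \ref{lin->unip}(a), $z$ is unipotent; set $N := z - I_{n+1}$, so that $N$ is nilpotent with Jordan type $(n_1, \ldots, n_k)$, $n_1 \ge \cdots \ge n_k$. Since the first column of $z$ is $e_1$, one has $Ne_1 = 0$, so $e_1 \in \Ker N$, and thus $e_1 \in \Imm N^d \setminus \Imm N^{d+1}$ for a unique depth $d$ with $0 \le d \le n_1-1$. Unraveling the defining relations $(z-I_{n+1})h_j = h_{j-1}$ along the first block of $J_z$ shows that a conjugating matrix $h \in \AGL_n(\F)$ with $h^{-1}zh = J_z$ exists if and only if $d = n_1 - 1$.

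I would prove this equality by contradiction. Suppose $d < n_1 - 1$ and choose $x \in \F^{n+1}$ with $N^d x = e_1$; then $e_1, N^{d-1} x, \ldots, x$ is a Jordan chain of length $d+1$ beginning with $e_1$. By standard Jordan theory, this chain can be completed to a full Jordan basis $\mathcal{B}$ of $z$ on $\F^{n+1}$ in which $e_1$ is the first vector of a block of size $d+1$. Let $g \in \GL_{n+1}(\F)$ be the matrix whose columns are $\mathcal{B}$; its first column is $e_1$, so $g \in \AGL_n(\F)$, and $g^{-1}zg = J$ is a block-diagonal Jordan matrix whose first block has size $m_1 = d+1$ while some later block has size $n_1 > m_1$. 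Then $R' := g^{-1}Rg$ is a regular subgroup of $\AGL_n(\F)$ contained in $\C_{\Mat_{n+1}(\F)}(J) \cap \AGL_n(\F)$; but Lemma \ref{Jacobson} asserts that this intersection is not transitive on affine row vectors, contradicting the regularity of $R'$.

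Granted $d = n_1-1$, the proof concludes as follows: pick $u_1$ with $N^{n_1-1}u_1 = e_1$, form the Jordan chain $e_1, N^{n_1-2}u_1, \ldots, u_1$ of maximal length, and extend to a Jordan basis of $z$ whose remaining blocks are listed in non-increasing size. The corresponding change-of-basis matrix $h$ has $e_1$ as its first column, hence lies in $\AGL_n(\F)$, and $h^{-1}zh = J_z$ by construction.

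The main obstacle is the invocation of the classical Jordan-chain completion result in the second paragraph: the precise fact needed is that any Jordan chain of length $d+1$ headed by a kernel vector of depth $d$ can be extended to a Jordan basis realizing the full Jordan type of $z$. This is standard, but it is the only part of the argument that does not follow directly from the hypotheses of the theorem or from the lemmas already established in this section; the regularity of $R$ enters the argument precisely through Lemma \ref{Jacobson} applied to $R'$.
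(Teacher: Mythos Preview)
Your argument is correct. The one external ingredient you flag---that a kernel vector of exact depth $d$ heads a Jordan chain of length $d+1$ which splits off as a direct summand---is indeed the crux, and it holds: since $N^d x = e_1 \notin \Imm N^{d+1}$, the cyclic $\F[N]$-submodule $\langle x, Nx, \ldots, N^d x\rangle$ is pure, hence a direct summand over the local ring $\F[t]/(t^{n_1})$. Everything else (the reduction to $d = n_1-1$ via Lemma~\ref{Jacobson}, and the final construction of $h$) is routine.

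Your route differs genuinely from the paper's. The paper never introduces the depth of $e_0$; instead it first conjugates inside $\widehat{\GL}_n(\F)$ to put $\pi(z)$ in Jordan form, so that $z$ becomes $\left(\begin{smallmatrix}1 & w \\ 0 & \blockdiag(J_{m_1},\ldots,J_{m_h})\end{smallmatrix}\right)$, and then handles the residual row $w$ by hand: a translation kills $w$ if every block-segment $w_i$ has leading entry zero (forcing a contradiction with Lemma~\ref{Jacobson}), and otherwise Lemma~\ref{transitivity} supplies an explicit centralizing matrix that absorbs $w$ into an enlarged first block $J_{1+m_1}$; a second appeal to Lemma~\ref{Jacobson} then forces $1+m_1$ to be maximal. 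So the paper trades the single module-theoretic fact you invoke for two applications of Lemma~\ref{Jacobson} together with the transitivity Lemma~\ref{transitivity}, keeping the argument entirely internal to the explicit centralizer description of Section~\ref{Sec:unip}. Your version is shorter and more conceptual once the chain-completion lemma is granted; the paper's version is more self-contained and entirely constructive.

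One small expository point: your sentence ``\ldots shows that a conjugating matrix $h \in \AGL_n(\F)$ \ldots exists if and only if $d = n_1-1$'' front-loads both directions, but only the ``only if'' direction is immediate from unraveling the relations; the ``if'' direction is exactly the chain-completion step you supply two paragraphs later. It would read more cleanly to state only the necessary direction there.
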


\begin{proof} 
Let $t$ be the number of non-trivial invariant factors of $z$.
\smallskip

\noindent\textbf{Case $t=1$}, i.e. $J_z=J_{n+1}$.
Let $g\in \GL_{n+1}(\F)$ be  such that $g^{-1}zg=J_z$. 
Since $\left\langle e_0^{\T}\right\rangle$ is the eigenspace of $J_z$ (acting on the left) 
we have that $\left\langle ge_0^{\T}\right\rangle$ must be  the eigenspace
of $z$  (acting on the left). From $z\in \AGL_n(\F)$ it follows $ze_0^{\T}=e_0^{\T}$, hence $ge_0^{\T}=\lambda 
e_0^{\T}$. We conclude that 
$\lambda^{-1}g\in \AGL_n(\F)$ conjugates $z$ to $J_z$. 
\smallskip

\noindent\textbf{Case $t>1$}.
By the unipotency of $z$, there exists $g\in \widehat{\GL}_n(\F)$ that
conjugates $z$ to 
$$z'= \begin{pmatrix}
1&w_1&\dots &w_h\\
0&J_{m_1}&&\\
\vdots &&\ddots\\
0&0&\dots&J_{m_h}
\end{pmatrix}.$$
We claim that the first coordinate of  $w_i\in \F^{n_i}$ cannot be $0$ for all $i\ge 1$. 
Indeed, in this case, there exists $u_i\in \F^{n_i}$ such that
$w_i=u_i\left(I_{n_i}-J_{n_i}\right)$ for all $i\ge 1$. 
Setting $u=\left(u_1,\dots, u_h\right)$
we have  
$$z'':= \begin{pmatrix}1&u\\
0&I
\end{pmatrix} z'\begin{pmatrix}1&u\\
0&I
\end{pmatrix}^{-1}= \begin{pmatrix}
1&0 \\
0&J
\end{pmatrix},\quad J=\blockdiag\left(J_{m_1}, \dots,J_{m_h}\right).$$
It follows that $\C_{\AGL_n(\F)}(z'')$ is not transitive on the affine vectors 
by Lemma \ref{Jacobson}, noting that  $J\ne I_{n}$: a contradiction.
So there exists some $t$, $1\le t\le h$,  such that the first coordinate of $w_t$ is non zero.
Up to conjugation  by an obvious permutation matrix in $\widehat{\GL}_n(\F)$ we may assume $t=1$, i.e., $w_t=w_1$.

By Lemma \ref{transitivity} there exists
$p\in \GL_n(\F)$ which centralizes $\blockdiag\left(J_{m_1}, \dots, J_{m_h}\right)$ and   
has $v= (w_1,\dots, w_h)$ as first row. It follows that $vp^{-1}=\left(1,0,\dots, 0\right)$. 
Thus
$$z'''= \begin{pmatrix}
1&0\\
0&p
\end{pmatrix} z'\begin{pmatrix}1&0\\
0&p^{-1}
\end{pmatrix}= \blockdiag(J_{1+m_1},J_{m_2}, \dots, J_{m_h}).$$
Again by Lemma \ref{Jacobson} we must have
$m_1+1\ge m_i$ for all $i\ge 2$. A final conjugation, if necessary, by a permutation matrix 
in $\widehat{\GL}_n(\F)$ allows to arrange the blocks of $z'''$ in non-increasing sizes,
i.e. allows to conjugate $z'''$ to $J_z$ as in the statement.
\end{proof}

\section{Some useful parameters}\label{param}

We introduce some parameters that will be used mainly to exclude conjugacy among regular subgroups.
Let $H$ be any unipotent subgroup of $G_n:=\AGL_n(\F)$.
For each $h-I_{n+1}\in H-I_{n+1}$ we may consider the degree of its  minimal polynomial over $\F$,
denoted by $\deg \min_{\F}(h-I_{n+1})$, and its rank, denoted by $\rk(h-I_{n+1})$.
E.g., $\deg \min_{\F}(h-I_{n+1})=n+1$ if and only if $h$ is conjugate to a unipotent Jordan block  $J_{n+1}$ of size $n+1$. Note that $\rk(J_{n+1}-I_{n+1})=n$.

Hence,  we may set:
\begin{equation*}
\begin{array}{rcl}
d(H) & = &\max\{\deg \min_{\F}(h-I_{n+1})\mid  h \in H\}; \\
r(H) & = &\max\{\rk(h-I_{n+1})\mid h \in H \}; \\
k(H) & = &\dim_\F \{w\in \F^n: w\pi(h)=w \}.
\end{array}
\end{equation*}
If $H$ is a subgroup of a regular subgroup $U$ such that $\delta_U$ is linear, then
$k(H)=\dim_\F \Ker(\delta_U |_H)$.

Clearly if two unipotent subgroups $H_1$, $H_2$ are conjugate, then: 
$$d(H_1)=d(H_2),\quad r(H_1)=r(H_2),\quad k(H_1)=k(H_2),$$
$$d(\Z(H_1))=d(\Z(H_2)),\quad r(\Z(H_1))=r(\Z(H_2)),
\quad k(\Z(H_1))=k(\Z(H_2)).$$

\begin{lemma}\label{indecomposable}
Let $H$ be a unipotent subgroup of $\AGL_n(\F)$ and assume that $\F^n$ is the direct sum of non-trivial
 $\pi(H)$-invariant subspaces $V_1, \dots, V_s$.
Then  $k(H)\geq s$.
In particular, if $k(H)=1$, then $H$ is indecomposable.
\end{lemma}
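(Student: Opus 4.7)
The plan is to interpret $k(H)$ as the dimension of the common fixed subspace of $\pi(H)$ acting on $\F^n$, namely
\[
W(H) := \{w \in \F^n : w\pi(h) = w \text{ for every } h \in H\},
\]
and then decompose $W(H)$ along the given $\pi(H)$-invariant direct sum $\F^n = V_1 \oplus \cdots \oplus V_s$.

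First I would observe that, since each $V_i$ is $\pi(H)$-stable, a vector $w = w_1 + \cdots + w_s$ with $w_i \in V_i$ satisfies $w\pi(h) = w$ if and only if $w_i\pi(h) = w_i$ for every $i$, by uniqueness of the decomposition in $\F^n = V_1 \oplus \cdots \oplus V_s$. Consequently,
\[
W(H) = \bigoplus_{i=1}^{s} \bigl(V_i \cap W(H)\bigr) = \bigoplus_{i=1}^{s} W_i,
\]
where $W_i := V_i \cap W(H)$ is the fixed subspace in $V_i$ of the restriction $\pi(H)|_{V_i}$.

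Next I would invoke the fact already recalled in the Introduction (see \cite{H}): every unipotent subgroup of a general linear group is conjugate to a group of upper unitriangular matrices, so any unipotent group acting on a nonzero finite-dimensional $\F$-vector space has a nonzero common fixed vector. The restricted group $\pi(H)|_{V_i}$ is unipotent, since from $(h-I_{n+1})^{n+1}=0$ one reads off $(\pi(h)-I_n)^{n+1}=0$ for every $h \in H$, and $V_i$ is nontrivial by hypothesis. Therefore $\dim_\F W_i \ge 1$ for every $i$, and summing gives
\[
k(H) = \dim_\F W(H) = \sum_{i=1}^{s} \dim_\F W_i \ge s.
\]
The final assertion is then immediate: if $k(H) = 1$, no decomposition of $\F^n$ into $s \ge 2$ nontrivial $\pi(H)$-invariant summands can exist, so $H$ is indecomposable.

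I expect no serious obstacle; the only nontrivial ingredient is the triangularizability of unipotent linear groups, which the authors have already cited. The one point worth stating carefully is the decomposition of $W(H)$, which relies solely on the invariance of each $V_i$ and the uniqueness of components in a direct sum.
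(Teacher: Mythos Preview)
Your proof is correct and takes essentially the same approach as the paper's: both extract a nonzero fixed vector in each $V_i$ from the triangularizability of unipotent groups (the paper cites \cite[Theorem 17.5, p.~112]{H} for this) and use linear independence across the direct summands. The only difference is presentational---you phrase it via the decomposition $W(H)=\bigoplus_i(V_i\cap W(H))$ and sum dimensions, while the paper simply exhibits the $s$ independent fixed vectors $w_1,\dots,w_s$ directly.
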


\begin{proof}
$H$ induces on each $V_i$ a unipotent group $H_i$. So, in each $V_i$, 
there exists a non-zero vector $w_i$ fixed by all elements of $H_i$ 
(see \cite[Theorem 17.5 page 112]{H}). 
It follows that $ w_1, \dots, w_s$ are $s$ linearly independent vectors of $\F^n$
fixed by $\pi(H)$. 
\end{proof}

\begin{notaz}
For sake of brevity, we write
\begin{equation}\label{Ubreve}
U=\begin{pmatrix}
1&\begin{array}{cccc}
x_1 & x_2 & \ldots & x_n   
  \end{array}
\\
0&\tau_U(x_1 , x_2,  \ldots,  x_n )
\end{pmatrix}
\end{equation}
to indicate the regular subgroup
$$U=\left\{\begin{pmatrix}
1&\begin{array}{cccc}
x_1 & x_2 & \ldots & x_n   
  \end{array}
\\
0&\tau_U(x_1 , x_2,  \ldots,  x_n )
\end{pmatrix}:  x_1,\ldots,x_n\in \F\right\}.$$
For all $i\le n$, we denote by $X_i$ the matrix of $U-I_{n+1}$ obtained taking $x_i=1$ and $x_j=0$ for all $j\neq i$.\\
The set $\left\{v_1,\dots , v_n\right\}$ is the canonical basis  of $\F^n$.
\end{notaz}

We recall that the center of a unipotent group is non-trivial.

\begin{lemma}\label{Cn} 
Let $U$ be a unipotent regular subgroup of $G_n$. 
If $d(\Z(U))=n+1$ then, up to conjugation,  $U=\C_{G_n}(J_{n+1})$. Moreover $U$ is abelian.
\end{lemma}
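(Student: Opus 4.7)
The plan is to locate inside $\Z(U)$ an element which, up to conjugation, can be taken equal to $J_{n+1}$, and then to exploit the fact that $\C_{G_n}(J_{n+1})$ itself is a regular subgroup, forcing $U$ to coincide with it.

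\medskip

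More precisely, I would first pick $z\in\Z(U)$ with $\deg\min_{\F}(z-I_{n+1})=n+1$. Since $U$ is unipotent, $z-I_{n+1}$ is nilpotent, so its minimal polynomial is $(t-1)^{n+1}$; but this polynomial has degree $n+1$, which equals the size of the matrix, so it also coincides with the characteristic polynomial of $z$. Hence the Jordan form of $z$ consists of a \emph{single} block $J_{n+1}$. Since $z\in\Z(U)\setminus\{I_{n+1}\}$, Theorem \ref{Jordan form} lets me conjugate $R=U$ inside $\AGL_n(\F)$ so that $z$ becomes its Jordan canonical form, i.e.\ $z=J_{n+1}$.

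\medskip

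Having arranged $z=J_{n+1}\in\Z(U)$, every element of $U$ must commute with $J_{n+1}$, so $U\subseteq \C_{G_n}(J_{n+1})$. By Lemma \ref{centralizzatore} (case $k=1$), $\C_{\Mat_{n+1}(\F)}(J_{n+1})=T_{n+1,n+1}(\F)$ consists of upper triangular Toeplitz matrices; intersecting with $\AGL_n(\F)$ (i.e.\ imposing that the $(1,1)$-entry be $1$ and the first column be $e_0^{\T}$) gives exactly
$$\C_{G_n}(J_{n+1})=\left\{I_{n+1}+\sum_{i=1}^n x_i\bigl(J_{n+1}-I_{n+1}\bigr)^{i}\ :\ x_1,\dots,x_n\in\F\right\}.$$
Reading off the first row of such a matrix shows that for every $(x_1,\dots,x_n)\in\F^n$ there is one and only one element of $\C_{G_n}(J_{n+1})$ with first row $(1,x_1,\dots,x_n)$. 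Hence $\C_{G_n}(J_{n+1})$ is itself a regular subset (in fact a subgroup, since the product of two upper triangular Toeplitz matrices is again upper triangular Toeplitz).

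\medskip

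Finally I would combine regularity of $U$ with regularity of $\C_{G_n}(J_{n+1})$: both contain, for each $v\in\F^n$, a unique element of first row $(1,v)$, and the inclusion $U\subseteq \C_{G_n}(J_{n+1})$ forces these unique elements to coincide. Therefore $U=\C_{G_n}(J_{n+1})$. Commutativity is immediate, because $T_{n+1,n+1}(\F)$ is generated as an algebra by $J_{n+1}$ and is thus a commutative algebra, so the corresponding group $\C_{G_n}(J_{n+1})$ is abelian. There is no real obstacle in this argument; the only point that deserves care is the initial identification of the Jordan type of $z$ from $d(\Z(U))=n+1$, which is what makes Theorem \ref{Jordan form} directly applicable.
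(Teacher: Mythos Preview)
Your argument is correct and follows exactly the paper's approach: conjugate so that $J_{n+1}\in\Z(U)$, observe $U\le\C_{G_n}(J_{n+1})$, and use that the latter is itself regular to force equality. The only slip is in phrasing: when you write ``its minimal polynomial is $(t-1)^{n+1}$'' after naming $z-I_{n+1}$, the minimal polynomial of the nilpotent matrix $z-I_{n+1}$ is $t^{n+1}$, not $(t-1)^{n+1}$; the latter is the minimal polynomial of $z$ itself.
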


\begin{proof} Up to conjugation $J_{n+1}\in \Z(U)$, whence $U\le \C_{G_n}(J_{n+1})$. Since this group is regular, we 
have 
$U=\C_{G_n}(J_{n+1})$.
It follows that $U$ is abelian.
\end{proof}

\begin{lemma}\label{Tn} 
Let $U$ be a regular subgroup of $G_n$ such that $\delta_U$ is linear. 
If $r(U)=1$, then $U$ is  the translation subgroup $\Tr$, which is an abelian normal subgroup of $G_n$. 
Furthermore, $d(\Tr)=2$.  
\end{lemma}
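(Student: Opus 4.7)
The plan is to translate the rank hypothesis into a very rigid form for $\delta_U$, then use linearity together with invertibility of $\tau_U(v)$ to force $\delta_U\equiv 0$. For every $v\in\F^n$ the matrix $\mu_U(v)-I_{n+1}$ equals $\bigl(\begin{smallmatrix} 0 & v \\ 0 & \delta_U(v) \end{smallmatrix}\bigr)$, whose first row $(0,v)$ is nonzero whenever $v\neq 0$. The hypothesis $r(U)=1$ therefore forces every row of $\delta_U(v)$ to be an $\F$-multiple of $v$: concretely, there exists a column $a(v)\in\F^n$ with $\delta_U(v)=a(v)\,v$, the outer product of the column $a(v)$ with the row $v$.

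Next I would use linearity of $\delta_U$ to show that $a(v)$ does not depend on $v$. Scalar homogeneity of $\delta_U$ immediately gives $a(\lambda v)=a(v)$ for every $\lambda\in\F^\ast$ and every $v\neq 0$. For $n\ge 2$ and linearly independent $v,w\in\F^n$, additivity yields $a(v+w)(v+w)=a(v)v+a(w)w$; the left-hand side is a rank $\le 1$ matrix whose rows lie in $\langle v+w\rangle$, while the right-hand side decomposes as a sum of matrices with rows in $\langle v\rangle$ and $\langle w\rangle$, and linear independence of $v,w$ forces $a(v)=a(w)=a(v+w)$. Hence $a$ is constant on $\F^n\setminus\{0\}$, say equal to some $a_0\in\F^n$, and $\delta_U(v)=a_0 v$ for every $v\in\F^n$. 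The case $n=1$ is immediate from linearity alone.

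Finally I would show $a_0=0$ using invertibility of $\tau_U(v)$. Since $\tau_U(v)=I_n+a_0 v$ must lie in $\GL_n(\F)$ for every $v$, the matrix determinant lemma yields $\det\tau_U(v)=1+v\,a_0\ne 0$ for every $v\in\F^n$. If $a_0\neq 0$, then $v\mapsto v\,a_0$ is a nonzero $\F$-linear functional $\F^n\to\F$, hence surjective, so some $v$ satisfies $v\,a_0=-1$, a contradiction. Therefore $\delta_U\equiv 0$, whence $U=\Tr$. Normality of $\Tr$ in $G_n$ is immediate since $\Tr=\Ker\pi$, and every nonidentity translation $t$ satisfies $(t-I_{n+1})^2=0$ with $t-I_{n+1}\neq 0$, so its minimal polynomial over $\F$ is $(X-1)^2$ and $d(\Tr)=2$.

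The heart of the argument is the rigid outer-product decomposition $\delta_U(v)=a(v)v$ and the additivity step that kills the $v$-dependence of $a$; this is the main technical point, but once established, the invertibility of $\tau_U(v)$ dispatches the remaining scalar parameter with a one-line determinant computation.
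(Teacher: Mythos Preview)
Your argument is correct, and it follows a genuinely different path from the paper's. The paper first invokes Theorem~\ref{lin->unip}(b) (unipotency when $\delta_U$ is linear) and conjugates $U$ into upper unitriangular form; then it asserts that the rank condition forces $\delta_U(v_i)=0$ for each basis vector, leaving the reader to combine strict upper triangularity with the rank constraint (and, implicitly, linearity applied to sums such as $v_1+v_i$).

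You avoid the unipotency result and any conjugation entirely: from $r(U)=1$ you extract the outer-product form $\delta_U(v)=a(v)\,v$, use linearity to show that $a$ is constant on $\F^n\setminus\{0\}$, and then kill the constant via the invertibility of $\tau_U(v)=I_n+a_0 v$ and the matrix determinant lemma. The trade-off is that the paper's route is shorter once one is willing to import the unipotency/triangularization machinery, whereas yours is self-contained, makes every step explicit, and shows that the conclusion follows already from the group condition $\tau_U(v)\in\GL_n(\F)$ without ever appealing to Theorem~\ref{lin->unip}.
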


\begin{proof}
By the unipotency, we may always assume that $U$ is upper unitriangular. 
Then, the rank condition gives $\delta(v_i)=0$, for $1\le i\le n$,
whence our claim.
\end{proof}

\begin{lemma}\label{Cn-1} 
Let $U$ be a regular subgroup of $G_n$, $n\ge 2$,  such that $\delta_U$ is linear. 
If $d(\Z(U))=n$ then,  up to conjugation, for some fixed $\alpha\in \F$:
\begin{equation}\label{Ralpha}
U=R_\alpha=
\left(\begin{smallmatrix}
1 & x_1 & x_2 & \dots & x_{n-2} & x_{n-1} & x_n\\ 
0 & 1 & x_1 &  \dots & x_{n-3} & 0 &  x_{n-2}\\
0 & 0 &  1 & \dots & x_{n-4} & 0 & x_{n-3}\\[-5pt]
\vdots&&&\ddots&\vdots & \vdots &\vdots\\[1pt]
0 & 0 & 0  & \dots &   1  & 0 & x_1\\
0 & 0 & 0  & \dots &   0  & 1 & \alpha x_{n-1}\\
0 & 0 & 0  &\dots  &  0 &   0 & 1
      \end{smallmatrix}\right).
\end{equation}
      In particular $U$ is abelian and $r(U)=n-1$.  Furthermore,
$R_0$ and $R_\alpha$ are not conjugate for any  $\alpha\ne 0$,  

If $n\geq 3$, an epimorphism $\Psi: \F[t_1,t_2]\to \F I_{n+1} + R_0$ is obtained setting 
\begin{equation}\label{eq:U0}
\Psi(t_1)  = X_1,\qquad 
\Psi(t_2)  = X_{n-1}.
\end{equation}
In this case we have $\Ker(\Psi)=\langle {t_1}^n, {t_1}^2, t_1t_2 \rangle$.

If $n\geq 4$ is even, then
$R_\alpha$ is conjugate to $R_1$ for any $\alpha\neq 0$ and an epimorphism $\Psi: \F[t_1,t_2]\to \F I_{n+1} + R_\alpha$
is obtained setting 
\begin{equation}\label{eq:Ualpha}
\Psi(t_1)  = \alpha X_1,\qquad \Psi(t_2)=\alpha^{\frac{n-2}{2}} X_{n-1}. 
\end{equation}
In this case $\Ker(\Psi)=\langle {t_1}^{n-1}-{t_2}^{2}, t_1t_2 \rangle$.

If $n$ is odd, write $\alpha\neq 0$ as $\alpha=\lambda \varepsilon^2$, $\lambda,\varepsilon \in \F^\ast$. Then
$R_\alpha$ is conjugate to $R_\lambda$ and an epimorphism $\Psi: \F[t_1,t_2]\to \F I_{n+1} + R_\alpha$
is obtained setting 
\begin{equation}\label{eq:Ualpha odd}
\Psi(t_1)  = \alpha X_1,\qquad \Psi(t_2)=\lambda^{\frac{n-3}{2}}\varepsilon^{n-2} X_{n-1}. 
\end{equation}
In this case $\Ker(\Psi)=\langle {t_1}^{n-1}-\lambda {t_2}^{2}, t_1t_2 \rangle$.
In particular $R_\alpha$ and $R_\beta$ ($\alpha,\beta\in \F^\ast$) are conjugate if and only if $\beta/\alpha$ is a square in $\F^\ast$.
\end{lemma}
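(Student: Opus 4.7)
The plan is to pin down $U$'s matrix form via the central element $z$ guaranteed by $d(\Z(U))=n$, then read off the structural claims and classify by rescaling. First I would pick $z\in\Z(U)$ with $\deg\min_\F(z)=n$. Since $z$ is unipotent of size $n+1$, its Jordan form must be $\blockdiag(J_n,1)$, and by Theorem~\ref{Jordan form} we may conjugate in $\AGL_n(\F)$ so that $z=\blockdiag(J_n,1)$. Then $U\subseteq\C_{\AGL_n(\F)}(z)$, and Lemma~\ref{centralizzatore} shows that the unipotent elements of this centralizer have an upper-triangular Toeplitz $n\times n$ top-left block with $1$'s on the diagonal, together with a single free entry in each of the off-block positions $(1,n+1)$ and $(n+1,n)$. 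Writing $\mu_U(x)$ in this form, linearity of $\delta_U$ forces the $(n+1,n)$-entry to be a linear form $c(x)=\sum_{i=1}^n\lambda_i x_i$. Expanding $\mu_U(x)\mu_U(y)$, the $(1,k+1)$-entry picks up cross-terms $\sum_{j=1}^{k-1}x_j y_{k-j}$ for $1\le k\le n-1$, and for $k=n-1$ an additional $x_n c(y)$ from the $(n+1,n)$-entry of $\mu_U(y)$. Requiring $c$ applied to the first row of the product to equal $c(x)+c(y)$ forces all bilinear cross-contributions to vanish, yielding $\lambda_j=0$ for $2\le j\le n-1$. Since $z=\mu_U(e_1)=\blockdiag(J_n,1)$ has $(n+1,n)$-entry $0$, also $\lambda_1=0$, and hence $c(x)=\lambda_n x_n$. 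Conjugation by the permutation in $\AGL_n(\F)$ that swaps coordinates $n$ and $n+1$ transforms $\mu_U(x)$ into the displayed $R_\alpha$ with $\alpha=\lambda_n$. Abelianness follows from the symmetry of the product formula once $\lambda_1=0$; and since columns $2$ and $n+1$ of $\mu_U(x)-I_{n+1}$ are $x_1 e_1$ and $x_n e_1$, they produce a universal dependence, so $r(U)\le n-1$, with equality attained by $X_1$.

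By Proposition~\ref{prop:algebre}, the conjugacy question reduces to isomorphism of the algebras $\A_{R_\alpha}$. To separate $R_0$ from $R_\alpha$ for $\alpha\ne 0$, note that $X_{n-1}\in\J(\A_{R_0})\setminus\J(\A_{R_0})^2$ satisfies $X_{n-1}^2=0$, whereas in $\A_{R_\alpha}$ one has $X_{n-1}^2=\alpha X_1^{n-1}\ne 0$, and a short expansion of $y\equiv\lambda X_1+\mu X_{n-1}\pmod{\J^2}$ shows that $y^2=0$ forces $\lambda=\mu=0$. The algebra presentations follow by direct verification of the relations $X_1^n=0$, $X_1 X_{n-1}=0$ and $X_{n-1}^2=\alpha X_1^{n-1}$ (the last reducing to $0$ in $\A_{R_0}$), together with the dimension count $\dim_\F\A_{R_\alpha}=n+1$; the specific scalings in \eqref{eq:Ualpha} and \eqref{eq:Ualpha odd} are chosen precisely so that $\Psi(t_1^{n-1}-t_2^2)=0$, respectively $\Psi(t_1^{n-1}-\lambda t_2^2)=0$.

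For the conjugacy classification, conjugation by $\diag(1,\dots,1,c)\in\widehat{\GL}_n(\F)\cap\C_{\AGL_n(\F)}(z)$ rescales the remaining parameter via $\alpha\mapsto c^2\alpha$, so $R_\alpha\cong R_\beta$ whenever $\beta/\alpha\in(\F^\ast)^2$. Conversely, any algebra isomorphism $\A_{R_\beta}\to\A_{R_\alpha}$ must, because of the relations $Y_1 Y_{n-1}=0$ and $Y_{n-1}^2=\beta Y_1^{n-1}$, restrict on $\J/\J^2$ to $Y_1\mapsto aX_1$, $Y_{n-1}\mapsto bX_{n-1}$ with $a,b\in\F^\ast$ (up to swapping the two generators, which I would handle separately), and matching squares then gives $\beta=b^2 a^{-(n-1)}\alpha$. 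Thus $\beta/\alpha$ must lie in $(\F^\ast)^2\cdot(\F^\ast)^{n-1}=(\F^\ast)^{\gcd(2,n-1)}$. For $n$ even, B\'ezout gives $u,v\in\ZZ$ with $2u+(n-1)v=1$, so any $\gamma\in\F^\ast$ equals $(\gamma^u)^2(\gamma^v)^{n-1}$, hence every nonzero $\alpha$ yields $R_\alpha\cong R_1$. For $n$ odd the subgroup is $(\F^\ast)^2$, so $R_\alpha\cong R_\beta$ iff $\beta/\alpha\in(\F^\ast)^2$.

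The hard part is the $n$-even non-square case: elementary rescaling alone only covers square ratios, so the claim that every nonzero $\alpha$ gives a group conjugate to $R_1$ is most cleanly obtained through Proposition~\ref{prop:algebre}, or alternatively by constructing an explicit matrix conjugation using a different Jordan basis for a generic central element $z'\in\Z(U)$ of Jordan type $(n,1)$.
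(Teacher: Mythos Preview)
Your reduction to the form $R_\alpha$ is correct and close in spirit to the paper's argument, though the paper takes a slightly slicker route: instead of imposing the closure condition on $c(x)$, it observes that the subalgebra $\F[z]$ already sits inside $\A_U$, so the elements $I+(z-I)^k$ for $1\le k\le n-1$ lie in $U$ and directly give $\delta_U(v_1),\dots,\delta_U(v_{n-1})$; only $\delta_U(v_n)$ then needs the centralizer description. Your multiplication argument reaches the same conclusion and is fine.

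There is, however, a genuine gap in your separation of $R_0$ from $R_\alpha$ for $\alpha\ne 0$. Your proposed invariant is the existence of $y\in\J\setminus\J^2$ with $y^2=0$, and you claim that in $\A_{R_\alpha}$ the condition $y^2=0$ forces $\lambda=\mu=0$ for $y\equiv\lambda X_1+\mu X_{n-1}\pmod{\J^2}$. This is false when $n$ is odd and $-\alpha$ is a square. For $n=3$ one has $y^2=(\lambda^2+\alpha\mu^2)X_1^2$, which vanishes for $(\lambda,\mu)=(\sigma,1)$ whenever $\sigma^2=-\alpha$. More generally, for odd $n$ write $y=\mu X_{n-1}+c\,X_1^{(n-1)/2}+\cdots$; then $y^2=(\mu^2\alpha+c^2)X_1^{n-1}$, and again a nontrivial solution exists as soon as $-\alpha\in(\F^\ast)^2$. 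So the invariant does not distinguish $R_0$ from $R_\alpha$ in these cases. The paper avoids this entirely by using the parameter $k(U)=\dim_\F\Ker\delta_U$: one checks directly that $k(R_0)=2$ while $k(R_\alpha)=1$ for every $\alpha\ne 0$.

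Your converse argument for $n$ odd is also too loose. You assert that an isomorphism $\A_{R_\beta}\to\A_{R_\alpha}$ must send $Y_1\mapsto aX_1$ and $Y_{n-1}\mapsto bX_{n-1}$ on $\J/\J^2$, but the relation $Y_1Y_{n-1}=0$ only forces \emph{one} of the two images to have zero $X_1$-component modulo $\J^2$; the other may well mix $X_1$ and $X_{n-1}$. Moreover, matching $Y_{n-1}^2=\beta Y_1^{n-1}$ produces, after the dust settles, an equation of the form $\beta a^{n-1}=b'^2\alpha + c^2$ with an uncontrolled square $c^2$ coming from the $\J^2$-part of $\psi(Y_{n-1})$, and this does not obviously yield $\beta/\alpha\in(\F^\ast)^2$. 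The paper instead runs an explicit matrix computation: writing $R_\alpha$ and $R_\beta$ in the form $\left(\begin{smallmatrix}1&X&x_n\\0&I+D_X&AX^{\sf T}\\0&0&1\end{smallmatrix}\right)$ with $\det A=\pm\alpha$, it factors the conjugating matrix $Q$, reduces to $MBM^{\sf T}=\lambda\widetilde A$, and compares determinants of the $(n-1)\times(n-1)$ blocks to obtain $\beta(\det M)^2=\lambda^{n-1}\alpha$, which for odd $n$ forces $\beta/\alpha$ to be a square.
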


\begin{proof}
Up to conjugation we may suppose that $z=\blockdiag(J_{n},J_1)\in \Z(U)$.
The subalgebra generated by $z$ coincides with the set $\left\{\begin{pmatrix} X & 0 \\ 0 & 1 
 \end{pmatrix}: X\in \C_{G_{n-1}}(J_n)\right\}$. This information gives the values of $\delta(v_i)$
for $1\leq i\leq n-1$. From Lemma \ref{centralizzatore}, we get
$\delta(v_{n})=\alpha E_{n,n-1}$. Applying \eqref{vivj}, 
it follows that $U$ is abelian.
Conjugating by the permutation matrix associated to the transposition $(n,n+1)$ we obtain that $U$ is conjugate to $R_\alpha$.

The subgroups $R_0$ and $R_\alpha$ are not conjugate when $\alpha\neq 0$,  
since $k(R_0)=2$ and $k(R_\alpha)=1$. The presentations of the corresponding algebras can be verified by matrix 
calculation.

Assume now that $n$ is odd. If $\alpha=\beta\varepsilon^2\neq 0$, then the subgroups $R_\alpha$ and $R_\beta$ are conjugate in virtue of \eqref{eq:Ualpha odd} and Proposition \ref{prop:algebre}.
Conversely, suppose that $\alpha,\beta\in \F^\ast$ and 
that $Q^{-1} R_\alpha Q=R_\beta$ for some $Q\in\widehat{\GL}_{n}(\F)$ (see Proposition \ref{prop:algebre}).
According to \eqref{Ralpha}, write $$R_\alpha=\begin{pmatrix} 1 & X  & x_n \\ 
0 & I_{n-1}+D_X & AX^{\T}\\
0 & 0 & 1
\end{pmatrix},
\quad
R_\beta=\begin{pmatrix} 1 & Y & y_n \\ 
0 & I_{n-1} +D_Y  & BY^{\T}\\
0  & 0 & 1\end{pmatrix},
$$
where $X=(x_1,\ldots,x_{n-1})$, $Y=(y_1,\ldots,y_{n-1})$, $A=\diag(\antidiag(1,\ldots,1),\alpha)$
and $B=\diag(\antidiag(1,\ldots,1),\beta)$. Hence $\det(A)=\zeta\alpha$ and $\det(B)=\zeta\beta$,
where $\zeta=(-1)^{\frac{n+1}{2}}$.
We may assume  that $v_n Q=\lambda v_n$ since 
$\left\langle v_n\right\rangle$ is the subspace fixed pointwise by  both subgroups.
Thus, for some $N\in \F^{n-1}$ and $\lambda \in \F^\ast$, $Q=Q_1Q_2$, where
$$Q_1=\begin{pmatrix} 
1 & 0 & 0 \\ 0 & I_{n-1} & \lambda^{-1} N \\
 0 & 0& 1
\end{pmatrix},
  \quad 
  Q_2=\begin{pmatrix} 
1 & 0 & 0 \\ 0 & M & 0 \\
 0 & 0& \lambda
\end{pmatrix},
  \quad \det(M)\neq 0,\;
  \lambda \ne 0.$$
Now, $Q_1^{-1}R_\alpha Q_1=\widetilde{R}_\alpha$, where
$$\widetilde{R}_\alpha=
\begin{pmatrix} 
1 & X & \tilde x_n \\ 0 & I_{n-1}+D_X & \lambda^{-1}D_XN+A X^{\T} \\
 0 & 0& 1
\end{pmatrix}=\begin{pmatrix} 
1 & X & \tilde x_n \\ 0 & I_{n-1}+D_X & \widetilde{A} X^{\T} \\
 0 & 0& 1
\end{pmatrix},
$$
with  $\det(\widetilde{A})=\zeta \alpha$.
From $\widetilde{R}_\alpha Q_2-Q_2R_{\beta}$ we get, in particular,
$Y=XM$ and $MBY^{\T}=\lambda \widetilde{A} X^{\T}$, whence
$MBM^{\T} X^{\T}=\lambda \widetilde{A} X^{\T}$, for all $X\in \F^{n-1}$.
It follows that $MBM^{\T}=\lambda \widetilde{A}$ and taking the determinant of both sides, we obtain
$\zeta \beta (\det(M))^2=\zeta\lambda^{n-1} \alpha$.
We conclude that $\beta/\alpha$ is a square. 
\end{proof}

\section{Standard  regular subgroups}\label{Sec:standard}

The aim of this section is to define, for every partition $\lambda$ of $n+1$ different from $(1^{n+1})$, 
one or two abelian regular subgroups $S_\lambda, S_\lambda^{\s}$ of $\AGL_n(\F)$ so that different partitions define 
non-conjugate subgroups.

To this purpose we start by identifying the direct product $\AGL_{m_1}(\F)\times\AGL_{m_2}(\F)$ 
with the stabilizer of $\F^{m_1}$ and $\F^{m_2}$ in $\AGL_{m_1+m_2}(\F)$,
namely with the subgroup:
\begin{equation*}
\left\{\begin{pmatrix}
1&v&w\\
0&A&0\\
0&0&B
\end{pmatrix}: v\in \F^{m_1},\ w\in \F^{m_2},\ A\in \GL_{m_1}(\F),\ B\in \GL_{m_2}(\F)\right\}.\end{equation*}
Clearly, in this identification, if  $U_i$ are respective regular subgroups of $\AGL_{m_i}(\F)$ for $i=1,2$  
then $U_1\times U_2$ is a regular subgroup of $\AGL_{m_1+m_2}(\F)$.\\

Here, and in the rest of the paper, we denote by $S_{(1+n)}$ the centralizer in $\AGL_n(\F)$ 
of  a unipotent Jordan block of size $n+1$, namely:
\begin{equation*} 
S_{(1+n)}= \C_{\AGL_{n}(\F)}(J_{1+n}).
\end{equation*}
Moreover we write  $\tau_{1+n}$ for $\tau_{S_{(1+n)}}$ and $\delta_{1+n}$ for $\delta_{S_{(1+n)}}$.
By Lemma \ref{Cn}, a regular subgroup of $\AGL_n(F)$ is conjugate to $S_{(1+n)}$ if and only if $d(\Z(S_{(1+n)}))=n+1$. 
Observe that $S_{(1+n)}$ is  abelian and  
indecomposable by Lemma \ref{indecomposable}.
More generally, for  a  partition $\lambda$  of $n+1$ such that:
\begin{equation}\label{partition}
\lambda=(1+n_1, n_2, \dots , n_s),\quad s\ge 1,\quad  n_i\ge n_{1+i}\geq 1,\quad 1\le i\le s-1,
\end{equation}
we define an abelian regular subgroup $S_\lambda$ of $\AGL_n(\F)$. Namely we set:
\begin{equation*}
S_\lambda= S_{(1+n_1, \ldots, n_s)}= \prod_{j=1}^s S_{(1+n_j)}.
\end{equation*}
Notice that $S_\lambda$ is indecomposable only for $s=1$. In particular,  if $\lambda=(2,1^{n-1})$, then 
$S_\lambda=\Tr$.

\begin{lemma} 
Given a partition  $(1+n_1,n_2)$ of $n+1$, with $1+n_1\ge n_2>1$, set: 
$$S_{(1+n_1,n_2)}^{\s} = \left\{\left(\begin{array}{ccc}
1&u&v\\
0&\tau_{1+n_1}(u)&w\otimes D u^{\T}\\
0&0&\tau_{n_2}(v)\\
\end{array}\right): u\in \F^{n_1}, v\in \F^{n_2}\right\},$$
where $w=(0,\ldots,0,1)\in \F^{n_2}$, $D =\antidiag(1,\dots, 1)\in \GL_{n_1}(\F)$. 
Then $S_{(1+n_1,n_2)}^{\s}$ is an indecomposable regular subgroup of $\AGL_{n}(\F)$ with 
$d(S_{(1+n_1,n_2)}^{\s})=n_1+2$.
\end{lemma}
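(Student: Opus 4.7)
The plan is to verify, in turn, four properties of $U:=S^{\s}_{(1+n_1,n_2)}$: regularity, closure under multiplication (which, since a regular submonoid is automatically a subgroup, yields the subgroup property), $k(U)=1$ (which combined with Lemma~\ref{indecomposable} gives indecomposability), and $d(U)=n_1+2$. Regularity is immediate: the parametrization $(u,v)\mapsto\mu_U(u,v)$ is a bijection $\F^{n_1}\times\F^{n_2}\to U$ whose inverse reads off the first row.

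Closure under multiplication is the main obstacle. Computing the first row of $\mu_U(u,v)\mu_U(u',v')$ yields $(1,u'',v'')$ with
\[
u''=u'+u\tau_{1+n_1}(u'),\qquad v''=v'+(uDu'^{\T})w+v\tau_{n_2}(v').
\]
The middle block of the product equals $\tau_{1+n_1}(u'')$ since $S_{(1+n_1)}$ is a regular subgroup. The bottom block equals $\tau_{n_2}(v)\tau_{n_2}(v')=\tau_{n_2}(v'+v\tau_{n_2}(v'))$; since the difference $v''-(v'+v\tau_{n_2}(v'))=(uDu'^{\T})w$ lies in $\F w$ and $\tau_{n_2}$ does not depend on the last coordinate of its argument, this equals $\tau_{n_2}(v'')$. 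For the off-diagonal block I exploit two identities: first, $w\tau_{n_2}(v')=w$ (the last row of a unipotent upper triangular Toeplitz matrix equals $w$), which rewrites the product's off-diagonal as $w\otimes(\tau_{1+n_1}(u)Du'^{\T}+Du^{\T})$; and second, the Toeplitz intertwining $TD=DT^{\T}$ (for $T$ upper triangular Toeplitz) applied twice, which reduces the required match with $w\otimes D(u'')^{\T}$ to the symmetric identity $(\tau_{1+n_1}(u)-I)^{\T}u'^{\T}=(\tau_{1+n_1}(u')-I)^{\T}u^{\T}$, whose $i$-th entry on either side is $\sum_{a+b=i,\,a,b\ge 1}u_au'_b$.

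For indecomposability, a common $\pi$-fixed vector $(x,y)\in\F^{n_1}\oplus\F^{n_2}$ must satisfy $x\tau_{1+n_1}(u)=x$ for every $u$, forcing $x=\alpha v_{n_1}$; then the condition $x(w\otimes Du^{\T})+y\tau_{n_2}(v)=y$ evaluated at $v=0$ gives $\alpha u_1 w=0$ for all $u_1$, so $\alpha=0$; and the residual $y\tau_{n_2}(v)=y$ forces $y\in\F v_n$. Hence $k(U)=1$ and Lemma~\ref{indecomposable} applies. For $d(U)=n_1+2$: the lower bound is achieved by $z=\mu_U(v_1,0)$, for which a direct computation shows that $(z-I_{n+1})$ sends $v_n\mapsto v_{n_1}\mapsto v_{n_1-1}\mapsto\cdots\mapsto v_1\mapsto e_0\mapsto 0$ (where $e_0$ is the first standard basis vector of $\F^{n+1}$ and the canonical basis of $\F^n$ is viewed as the last $n$ standard basis vectors), so $\deg\min_{\F}(z-I_{n+1})=n_1+2$. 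For the upper bound, assign depth $1$ to $e_0$, depth $i+1$ to $v_i$ for $1\le i\le n_1$, depth $j+1$ to $v_{n_1+j}$ for $1\le j<n_2$, and depth $n_1+2$ to $v_n$; the hypothesis $1+n_1\ge n_2$ ensures that $\mu_U(u,v)-I_{n+1}$ strictly decreases depth on every basis vector for every $(u,v)$, whence $(\mu_U(u,v)-I_{n+1})^{n_1+2}=0$.
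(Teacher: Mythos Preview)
Your proof is correct and follows essentially the same approach as the paper's own proof: the paper merely asserts that closure under multiplication, indecomposability, and $d(U)=n_1+2$ follow from ``routine calculation'' and ``matrix calculation'', listing four auxiliary identities (a)--(d) that correspond exactly to your observations (that $u(w\otimes Du'^{\T})\in\F w$, that $\tau_{n_2}$ ignores the last coordinate, that $w\tau_{n_2}(v')=w$, and the Toeplitz intertwining $TD=DT^{\T}$ together with the symmetric convolution identity). Your argument is simply a fleshed-out version of theirs, with the explicit depth filtration for the upper bound on $d(U)$ being a clean way to carry out what the paper leaves implicit.
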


\begin{proof}
Routine calculation with matrices shows that $S_{(1+n_1,n_2)}^{\s}$ is closed under 
multiplication, hence a subgroup. Moreover it is indecomposable by Lemma \ref{indecomposable}. 
Again by matrix calculation one can see that $d(S_{(1+n_1,n_2)}^{\s})=n_1+2$. 
To check that $S_{(1+n_1,n_2)}^{\s}$ is a subgroup it is useful to note that: 
\begin{itemize}
 \item[(a)] all  components of $u(w\otimes Du^{\T})$, except possibly the last one,  are zero; 
\item[(b)] $\delta_{n_2}(v)$ 
does not depend on the last component of $v$; 
\item[(c)] $(w\otimes Du^{\T})\delta_{n_2}(v)=0$; 
\item[(d)] $w\otimes D(u_1\delta_{1+n_1}(u_2))^{\T}=\delta_{1+n_1}(u_1)(w\otimes Du_2^{\T})$ for all
$u_1,u_2\in \F^{n_1}$.
\end{itemize}
\end{proof}

Next, consider a partition $\mu$ of $n+1$ such that:
\begin{equation}\label{partitions}
\mu=(1+n_1, n_2, \dots , n_s),\ s\ge 2,\  1+n_1\geq n_2>1,\ n_i\ge n_{1+i}\geq 1,\ 2\le i\le s-1.
\end{equation}
We define the abelian regular subgroup $S_\mu^\s$ in the following way:
\begin{equation*}
S_\mu^\s = S_{(1+n_1,n_2,n_3 \ldots, n_s)}^{\s}=S_{(1+n_1,n_2)}^{\s}\times  \prod_{j=3}^s S_{(1+n_j)}.
\end{equation*}

The regular subgroups $S_\lambda,S_\mu^{\s}$ associated to partitions as above  will be 
called standard regular subgroups. As already mentioned  $S_\lambda$ and $S_\mu^{\s}$ are always abelian.

\begin{rem}\label{dim}
Let $\lambda =(1+n_1, \dots , n_s)$ be a partition as in \eqref{partition}.
Then $\F^n$ is a direct sum of $s$  indecomposable modules  of respective dimensions $n_i$ for which 
$d=n_i+1$.
Furthermore,
\begin{equation}\label{presUlam}
\A_\lambda=\F I_{n+1}+S_\lambda\cong \frac{\F[t_1,t_2,\ldots,t_s]}{\langle t_i^{n_i+1}: 1\leq i\leq 
s;\; t_i t_j: 1\leq i<j\leq s \rangle}.
\end{equation}
An epimorphism $\Psi: \F[t_1,t_2,\ldots,t_s]\to \A_\lambda$ is obtained by setting 
\begin{equation}\label{Psilam}
\Psi(t_i)=X_i.
\end{equation}

Let now $\mu=(1+n_1, \dots , n_s)$ be a partition as in \eqref{partitions}.
Then $\F^n$ is a direct sum of $s-2$ indecomposable  modules of respective dimension $n_i$, $i\ge 3$,
for which $d=n_i+1$, and a single indecomposable module of dimension 
$n_1+n_2$ for which $d=n_1+2$.
Furthermore,
\begin{equation}\label{presUmu}
\A_\mu^\s=\F I_{n+1}+ S_\mu^\s\cong \frac{\F[t_1,t_2,\ldots,t_s]}{\langle 
t_1^{n_1+1}-t_2^{n_2};\; t_i^{n_i+1}: 3\leq i\leq s; \;t_i t_j: 1\leq i<j\leq s \rangle}.
\end{equation}
An epimorphism $\Psi: \F[t_1,t_2,\ldots,t_s]\to \A_{\mu}^\s$ is obtained by setting 
\begin{equation}\label{Psimu}
\Psi(t_i)=X_i.
\end{equation}
\end{rem}

\begin{theorem} 
Let  $\lambda_1 =(1+n_1, \dots , n_s),\lambda_2=(1+m_1, \dots , m_t)$ be two partitions of $n+1$ as in 
\eqref{partition} and let $\mu_1 =(1+a_1, \dots ,a_h),\mu_2 =(1+b_2, \dots,b_k)$ be two partitions of $n+1$ as in 
\eqref{partitions}.
Then 
\begin{itemize}
\item[(\rm{a})] $S_{\lambda_1}$ is not conjugate to $S_{\mu_1}^\s$;
\item[(\rm{b})] $S_{\lambda_1}$ is conjugate in $\AGL_{n+1}(\F)$ to $S_{\lambda_2}$ if and only if 
$\lambda_1=\lambda_2$;
\item[(\rm{c})] $S_{\mu_1}^\s$ is conjugate in $\AGL_{n+1}(\F)$ to $S_{\mu_2}^\s$ if and only if $\mu_1=\mu_2$.
\end{itemize}
\end{theorem}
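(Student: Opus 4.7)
The plan is to distinguish the groups using two conjugation invariants of a regular subgroup $H$ of $\AGL_n(\F)$: the multiset $M(H)$ of dimensions of the indecomposable summands of $\F^n$ as a $\pi(H)$-module, and the integer $d(H)$ defined in Section~\ref{param}. The multiset is well defined by the Krull--Schmidt theorem applied to the finite dimensional $\pi(H)$-module $\F^n$, and it is preserved under conjugation because conjugating $H$ inside $\AGL_n(\F)$ conjugates $\pi(H)$ inside $\GL_n(\F)$. Invariance of $d(H)$ is immediate from its definition.

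From Remark~\ref{dim}, for $\lambda=(1+n_1,\ldots,n_s)$ one reads $M(S_\lambda)=\{n_1,\ldots,n_s\}$ and $d(S_\lambda)=n_1+1$, so the maximum of $M(S_\lambda)$ coincides with $d(S_\lambda)-1$. For $\mu=(1+a_1,a_2,\ldots,a_h)$ the same remark yields $M(S_\mu^\s)=\{a_1+a_2,a_3,\ldots,a_h\}$ and $d(S_\mu^\s)=a_1+2$; since $a_2\ge 2$ the element $a_1+a_2$ satisfies $a_1+a_2\ge d(S_\mu^\s)$, and since $a_3\le a_2\le a_1+1<a_1+a_2$ it is the unique strict maximum of $M(S_\mu^\s)$.

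Part (a) is then immediate: the largest element of $M(H)$ equals $d(H)-1$ when $H=S_{\lambda_1}$ but is at least $d(H)$ when $H=S_{\mu_1}^\s$, so the two subgroups cannot be conjugate. For (b), conjugacy of $S_{\lambda_1}$ and $S_{\lambda_2}$ forces $M(S_{\lambda_1})=M(S_{\lambda_2})$; arranging entries in non-increasing order gives $n_i=m_i$ for all $i$, whence $\lambda_1=\lambda_2$. For (c), conjugacy of $S_{\mu_1}^\s$ and $S_{\mu_2}^\s$ forces agreement of both the multiset and of $d$; the common $d$ recovers $a_1=d-2=b_1$, the unique maximum entry recovers $a_1+a_2=b_1+b_2$ and hence $a_2=b_2$, and the remaining entries (sorted) recover $a_j=b_j$ for $j\ge 3$, giving $\mu_1=\mu_2$.

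The only substantive input is the description of $M$ and $d$ furnished by Remark~\ref{dim}; this rests on the cyclic indecomposable action of each factor $S_{(1+n_j)}$ on $\F^{n_j}$ (coming from the centralizer structure of the Jordan block $J_{n_j+1}$) together with the indecomposability of the special factor $S_{(1+a_1,a_2)}^\s$, which follows from Lemma~\ref{indecomposable} since $k(S_{(1+a_1,a_2)}^\s)=1$. Once this structural data is in hand, the proof reduces to bookkeeping with the two invariants.
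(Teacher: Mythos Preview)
Your proof is correct and follows essentially the same approach as the paper. Both arguments rest on Remark~\ref{dim} and apply the Krull--Schmidt theorem to the module $\F^n$, distinguishing the groups by the dimensions of the indecomposable summands together with the invariant $d$; your packaging via the global pair $(M(H),d(H))$ is a mild streamlining of the paper's per-summand comparison, but the underlying idea is identical.
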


\begin{proof}
(a) There exists an isomorphism $\varphi: \A_{\lambda_1} \to \A_{\mu_1}^\s$.
Setting $vu:=v\varphi(u)$ for all $u\in \A_{\lambda_1}$, $v\in \J(\A_{\mu_1}^\s)$ we may consider $\J(\A_{\mu_1}^\s)$ 
as an $\A_{\lambda_1}$-module.
Clearly $\J(\A_{\lambda_1}) \cong \J(\A_{\mu_1}^\s)\cong \F^n$ as $\A_{\lambda_1}$-modules. By construction, $\F^n$ is 
a 
direct
sum of indecomposable $\A_{\lambda_1}$-modules. 
By Remark \ref{dim}, for $\J(\A_{\lambda_1})$ each direct summand has dimension $n_i$ and $d=n_i+1$ and 
for $\J(\A_{\mu_1}^\s)$ there is a direct summand of dimension $a_1+a_2$ and $d=a_1+2< a_1+a_2 +1$. 
Hence, by Krull-Schmidt Theorem (e.g., see \cite[page 115]{J}), $S_{\lambda_1}$ is not conjugate to $S_{\mu_1}^\s$.

(b) Arguing as before, we may consider  $\J(\A_{\lambda_2})$ 
as an $\A_{\lambda_1}$-module.
In this case, by construction, $\F^n$ is a 
direct
sum of indecomposable $\A_{\lambda_1}$-modules of dimension $n_i$ such that $d=n_i+1$ (see Remark \ref{dim}(1)).
By Krull-Schmidt Theorem they are conjugate if and only if $s=t$ and $n_i=m_i$.

(c) Again, we may consider  $\J(\A_{\mu_2}^\s)$ 
as an $\A_{\mu_1}^\s$-module.
In this case, by construction, $\F^n$ is a 
direct
sum of indecomposable $\A_{\mu_1}^\s$-modules, one of dimension $a_1+a_2$ such that $d=a_1+2<a_1+a_2+1$, the others of 
dimension $n_i$ such that $d=a_i+1$ (see Remark \ref{dim}(2)).
By Krull-Schmidt Theorem  they are conjugate if and only if $h=k$ and $a_i=b_i$.
\end{proof}

\begin{rem}\label{rem01}
Note that for $n\geq 3$, the regular subgroups $R_0$ and $R_1$ of Lemma \ref{Cn-1} coincide, respectively, with $S_{(n,1)}$ and  
$S_{(n-1,2)}^\s$.
\end{rem}

\section{Regular subgroups with linear $\delta$}\label{low}

By \cite[Lemma 5.1]{T}, the only regular subgroup of $\AGL_1(\F)$ is the translation subgroup 
$\Tr=S_{(2)}=\begin{pmatrix}
x_0 & x_1\\
0 & x_0
\end{pmatrix}$.  
However, already for $n=2$, a
description becomes much  more complicated.   
As seen in Example \ref{degio} there are $2^{|\R|}$ conjugacy classes of regular subgroups of $\AGL_2(\R)$
with trivial center. Also, restricting to the unipotent case, one may only say that every unipotent 
regular subgroup is conjugate to:
$$\begin{pmatrix} 1 & x_1 & x_2 \\ 0 & 1 & \sigma(x_1) \\ 0 & 0 & 1   \end{pmatrix}$$
where  $\sigma\in \Hom_{\mathbb{Z}}(\F,\F)$ (\cite[Lemma 5.1]{T}). See also Examples \ref{add} and \ref{Heg}.

Thus, from now on, we restrict our attention to regular subgroups $U$ of 
$\AGL_n(\F)$ such that the map $\delta_U$ defined in \eqref{tauv} is linear.
$U$ is unipotent by Theorem \ref{lin->unip} and so there exists $1\ne z\in \Z(U)$.
By Theorem \ref{Jordan form}, up to conjugation, we may assume that $z$ is a Jordan form.
For the notation we refer to Section \ref{param}. Our classification is obtained working  on the parameters $d$, $r$ 
and $k$, considered in this order. 

In the tables of the next subsections the indecomposability of the regular subgroups follows from Lemma \ref{indecomposable}, since $k(U)=1$, except for the subgroup $U_1^4$ of Table \ref{Tab6}, for which we refer to Lemma \ref{J3J2}. Also, we describe the kernel of the epimorphism 
$$\Psi: \F[t_1,\ldots,t_s]\rightarrow \F I_{n+1}+U.$$

\subsection{Case $n=2$}

If $d(\Z(U))=3$, then $U$ is abelian and  conjugate to $S_{(3)}$ by Lemma \ref{Cn}.
If $d(\Z(U))=2$, then   $U$ is abelian and $r(U)=1$ by  Lemma \ref{Cn-1}.
Thus $U$ is conjugate  $S_{(2,1)}$ by Lemma \ref{Tn}.
Table \ref{Tab4} summarizes these results.

\begin{table}[!th]
\begin{tabular}{ccccc}
  $U$ & $\F I_{n+1} + U$ & $\Psi$ & $\Ker(\Psi)$ \\\hline
\noalign{\smallskip}
 $S_{(3)}$ & $\begin{pmatrix}
x_0 &x_1&x_2\\
0 &x_0&x_1\\
0 & 0&x_0
\end{pmatrix}$ & \eqref{Psilam} &  $\langle {t_1}^3\rangle$ & indec.\\
\noalign{\smallskip}
 $S_{(2,1)}$ & $\begin{pmatrix}
x_0&x_1&x_2\\
0&x_0&0\\
0&0&x_0
\end{pmatrix}$ & \eqref{Psilam} & $\langle t_1, t_2\rangle^2$ \\
\noalign{\smallskip}
\end{tabular}
\caption{Representatives for the conjugacy classes of regular subgroups $U$ of $\AGL_2(\F)$ with linear 
$\delta$, for any field $\F$.}\label{Tab4}
\end{table}

\subsection{Case $n=3$}
To obtain the full classification, we need some preliminary results.

\begin{lemma}\label{J2J2}
If $d(\Z(U))=r(\Z(U))=2$, then $U$ is abelian, conjugate to 
$$U_1^3=\begin{pmatrix}
1&x_1&x_2&x_3\\
0&1&0&x_2\\
0&0&1&x_1\\
0&0&0&1
\end{pmatrix}$$
 and $\char \F=2$. An epimorphism $\Psi: \F[t_1,t_2]\to \F I_{4} + U$ is obtained by setting 
\begin{equation}\label{eq:22}
\Psi(t_1)=X_1,\quad  \Psi(t_2)=X_2.
\end{equation}
In this case, we have $\Ker(\Psi)=\langle {t_1}^2, {t_2}^2\rangle$.
\end{lemma}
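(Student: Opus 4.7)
The plan is to first pin down a canonical central element, then parameterize $U$ inside its centralizer, and finally extract both the relations defining $U_1^3$ and the characteristic-two condition from the constraint $d(\Z(U))=2$.

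Since $U$ is unipotent (Theorem \ref{lin->unip}(b)), $\Z(U)\neq\{I_4\}$, and Theorem \ref{Jordan form} lets me conjugate so that $\Z(U)$ contains an element $z=J_z$ in Jordan form with non-increasing block sizes. The conditions $d(\Z(U))=2$ and $r(\Z(U))=2$ force all Jordan blocks of $z$ to have size at most $2$ and to contribute total rank $2$; with $z\in\GL_4(\F)$ this leaves only $z=\diag(J_2,J_2)$.

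Next, Lemma \ref{centralizzatore} with $m_1=m_2=2$ describes $\C_{\Mat_4(\F)}(z)$ as $2\times 2$ block matrices with each block in $T_{2,2}(\F)$. Intersecting with $\AGL_3(\F)$ (first column $(1,0,0,0)^{\T}$) and imposing unipotency leaves a five-parameter family. Using that $U$ is regular with linear $\delta_U$, I would parameterize $r=\mu_U(v_1,v_2,v_3)$ so that the two non-affine-coordinate entries $(r_{32},r_{34})$ are independent linear forms in $v_1,v_2,v_3$ with coefficients $\alpha_1,\alpha_2,\alpha_3$ and $\beta_1,\beta_2,\beta_3$. Demanding $\mu_U(1,0,0)=z$ fixes $\alpha_1=0$ and $\beta_1=1$, and demanding closure under multiplication (equivalently \eqref{prodotto}) kills $\alpha_3$ and $\beta_3$. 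At this point \eqref{vivj} gives directly that $U$ is abelian.

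The two remaining parameters $\alpha_2$ and $\beta_2$ are eliminated using $d(\Z(U))=2$: since $U$ is abelian, $I_4+X_2\in\Z(U)$ and so $X_2^2=0$; computing $X_2^2$ in terms of $\alpha_2,\beta_2$ forces both to vanish, giving $U=U_1^3$. Characteristic two then falls out of one more computation: $X_1X_2=X_2X_1=E_{14}\neq 0$, so $(X_1+X_2)^2=2E_{14}$, and applying $d(\Z(U))\leq 2$ to $I_4+X_1+X_2\in\Z(U)$ forces $2E_{14}=0$, i.e.\ $\char\F=2$. The presentation is then immediate: $\Psi$ defined by $\Psi(t_i)=X_i$ is well-defined and surjective since $X_1X_2=X_3$, and because both $\F[t_1,t_2]/\langle t_1^2,t_2^2\rangle$ and $\F I_4+U$ have $\F$-dimension $4$, we get $\Ker(\Psi)=\langle t_1^2,t_2^2\rangle$.

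The main obstacle is the multiplication-closure step: six linear coefficients produce a bilinear system whose solution pins down everything modulo the two parameters then killed by $d(\Z(U))=2$; the rest is essentially bookkeeping.
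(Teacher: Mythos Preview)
Your argument is correct and follows essentially the same route as the paper: reduce to $z=\diag(J_2,J_2)$, read off the centralizer shape, use \eqref{prodotto} at $(v_1,v_2)$ to force $\delta(v_3)=0$, deduce abelianness via \eqref{vivj}, and then exploit $d(\Z(U))=d(U)=2$ on $\mu(v_2)$ and $\mu(v_1+v_2)$ to obtain $\alpha_2=\beta_2=0$ and $\char\F=2$, respectively. The only cosmetic difference is your explicit six-coefficient bookkeeping versus the paper's direct description of $\delta(v_2)$; the substance is identical.
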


\begin{proof}
We may assume $z=\blockdiag(J_2 ,J_2)\in \Z(U)$, whence $\delta(v_1)=E_{2,3}$.
From Lemma \ref{centralizzatore} and the unipotency of $U$ we obtain
$\delta(v_2)=E_{1,3}+\alpha E_{2,1}+\beta E_{2,3}$. It follows $v_1\delta(v_2)=v_3$.
Now, we apply \eqref{prodotto} to $v_1,v_2$, which gives $\delta(v_3)=\delta(v_1)\delta(v_2)=0$. 
Direct calculation shows that $U$ is abelian. Hence
$d(\Z(U))=d(U)=2$. In particular, $(\mu(v_2)-I_4)^2=0$ gives $\alpha=\beta=0$. Finally
$(\mu(v_1+v_2)-I_4)^2=0$ gives  $\char \F=2$. 
\end{proof}

It is convenient to denote by $V(\alpha_2,\alpha_3,\beta_2,\beta_3)$ the regular subgroup defined by:
\begin{equation}\label{U4}
V(\alpha_2,\alpha_3,\beta_2,\beta_3)=\begin{pmatrix}
1&x_1&x_2&x_3\\
0&1&0&0\\
0&\alpha_2 x_2+\alpha_3 x_3&1& 0\\
0&\beta_2 x_2+\beta_3 x_3&0&1
\end{pmatrix}.
\end{equation}
Notice that  $V(\alpha_2,\alpha_3,\beta_2,\beta_3)$ is abelian if and only if  $\alpha_3= \beta_2$.

We need the cosquare $A^{-\T}A$ of a nonsingular matrix $A$. If $A,B$ are congruent, 
i.e., $B=PAP^{\T}$ for a nonsingular $P$,  their cosquares are conjugate (e.g., see \cite{HS}).

\begin{lemma}\label{cosquare}
Let $\beta, \gamma \in \F$ with $\beta\gamma\ne 0$. The subgroups $V_\beta= V(1,1,0,\beta)$ and $V_\gamma 
=V(1,1,0,\gamma)$ are conjugate in $\AGL_3(\F)$ if and only if $\beta=\gamma$.
\end{lemma}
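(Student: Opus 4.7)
The ``if'' direction is trivial. For the converse, my plan is to exploit that $\delta_{V_\beta}$ and $\delta_{V_\gamma}$ are linear (routine check), so Proposition \ref{prop:algebre} applies: any $\AGL_3(\F)$-conjugation between $V_\beta$ and $V_\gamma$ may be realised by some $P\in \GL_3(\F)\le \widehat{\GL}_3(\F)$ with $\delta_{V_\gamma}(vP)=P^{-1}\delta_{V_\beta}(v)P$ for all $v\in \F^3$. A short application of \eqref{vivj} identifies $\Z(V_\beta)=\mu_{V_\beta}(\F v_1)$ (and similarly for $V_\gamma$). Since the conjugation sends centres to centres, $v_1 P\in \F v_1$, so the first row of $P$ has the shape $(\lambda,0,0)$ for some $\lambda\in\F^\ast$, and $P$ acquires a bottom-right $2\times 2$ block $Q\in\GL_2(\F)$.

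Set $A_\beta=\left(\begin{smallmatrix}1&1\\ 0&\beta\end{smallmatrix}\right)$, and analogously $A_\gamma$. Every matrix $\delta_{V_\beta}(v)$ has only its first column non-zero, and that column restricted to rows $2,3$ equals $A_\beta (x_2,x_3)^{\T}$; the same description holds for $\gamma$. Substituting this into $\delta_{V_\beta}(v)P = P\delta_{V_\gamma}(vP)$, the left-hand side picks up the factor $\lambda$ from the first row of $P$, while the right-hand side picks up $Q$ acting on the relevant column; noting that $(y_2,y_3)=(x_2,x_3)Q$, the whole equation collapses to the clean identity
$$\lambda A_\beta = Q A_\gamma Q^{\T}.$$

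To finish, I would invoke the cosquare $A\mapsto A^{-\T}A$ highlighted just before the lemma: it is unchanged by non-zero scalars, and turns congruence of matrices into conjugation (cf.~\cite{HS}). Hence $A_\beta^{-\T}A_\beta$ and $A_\gamma^{-\T}A_\gamma$ are similar in $\Mat_2(\F)$, and in particular have the same trace. A direct computation gives $\operatorname{tr}(A_\beta^{-\T}A_\beta)=2-1/\beta$, and equating traces forces $\beta=\gamma$.

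The main technical obstacle is the reduction of the intertwining equation to the compact identity $\lambda A_\beta=QA_\gamma Q^{\T}$: this is pure matrix bookkeeping, but it crucially uses that every $\delta_{V_\beta}(v)$ has a single non-zero column, so that both sides of the equation are rank-$1$ matrices matchable column by column. Once this identity is secured, the cosquare invariance and the trace comparison dispatch the remainder of the proof in a few lines.
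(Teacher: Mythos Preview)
Your argument is correct and follows essentially the same route as the paper's own proof: reduce the conjugacy to a congruence $\lambda A_\beta = Q A_\gamma Q^{\T}$ of $2\times 2$ matrices, then compare the cosquare invariants. The only cosmetic differences are that the paper pins down $\langle v_1\rangle$ as the common fixed subspace (you use the centre instead, which is fine), and that the paper first normalises away the off-block entries $q_1,q_2$ of the conjugating matrix before computing with group elements, whereas by working directly with the $\delta$-intertwining relation from Proposition~\ref{prop:algebre} you see that these entries simply do not enter the equation---a slight streamlining; finally, the paper compares characteristic polynomials of the cosquares where you compare traces, which for $2\times 2$ matrices of determinant $1$ is equivalent.
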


\begin{proof}
Suppose that  $Q^{-1}V_\beta Q=V_\gamma$ for some $Q\in \widehat{\GL}_3(\F)$ (see Proposition \ref{prop:algebre}). 
We may assume that $v_1Q=\lambda v_1$ since 
$\left\langle v_1\right\rangle$ is the subspace of $\F^3$ fixed pointwise by  both subgroups.
Thus $Q$ has shape:
$$Q=\begin{pmatrix}
1&0&0&0\\
0&\lambda&0&0\\
0&q_1 &p_{1,1}&p_{1,2}\\
0&q_2 &p_{2,1}&p_{2,2}
\end{pmatrix}, \quad \lambda \ne 0, \quad P=\begin{pmatrix}
p_{1,1}&p_{1,2}\\
p_{2,1}&p_{2,2}
\end{pmatrix} \textrm{ nonsingular}.$$
The matrix $K=I_4-\frac{q_1}{\lambda}E_{3,2}-\frac{q_2}{\lambda}E_{4,2}$
normalizes $V_\beta$. Hence, substituting $Q$ with $KQ$, we may suppose 
$q_1=q_2=0$.
Setting $B=\begin{pmatrix}
1&1\\
0&\beta
\end{pmatrix}$ and $C=\begin{pmatrix}
1&1\\
0&\gamma
\end{pmatrix}$ we have:
$$V_\beta=\left(\begin{array}{ccc}
1 & x_1 & X \\
0 & 1 & 0 \\ 
0& B X^{\T} &I_2
\end{array}\right)\ X=(x_2,x_3), \quad V_\gamma=
\left(\begin{array}{ccc}
1 & y_1 & Y \\
0 & 1 & 0 \\ 
0& C Y^{\T} &I_2
\end{array}\right)\ Y=(y_2,y_3).$$
From $V_\beta Q =Q V_\gamma$ we get $Y=XP$ and $\lambda BX^{\T} =PCY^{\T}$, whence
$$\lambda BX^{\T}= PCP^{\T}X^{\T}, \quad \textrm{for all } X\in\F^2.$$
It follows that $\lambda B=PCP^{\T}$, i.e., the matrices $\lambda B$ and $C$ are
congruent. So their cosquares must be conjugate. But the 
characteristic polynomials of the cosquares are respectively $t^2+(\beta^{-1}-2)t+1$ and $t^2+(\gamma ^{-1}-2)t+1$, 
whence $\beta=\gamma$.
\end{proof}

\begin{lemma}\label{J2:3}
Suppose $d(\Z(U))=2$ and $r(\Z(U))=1$.
If $U$ is abelian, then $U=S_{(2,1^2)}$. Otherwise,  $U$ is conjugate to exactly one of the following subgroups:
\begin{itemize}
\item[(\rm{a})]
 $N_1=\begin{pmatrix}
1&x_1&x_2&x_3\\
0&1&0&0\\
0&0&1& x_1\\
0&0&0&1
\end{pmatrix}$,  if $k(U)=2$;
 \item[(\rm{b})] 
 $N_2=\begin{pmatrix}
1&x_1&x_2&x_3\\
0&1&0&-x_2\\
0& 0&1& x_1\\
0&0&0&1
\end{pmatrix}$, if $k=1$, $d(U)=2$ and $\char \F\neq 2$;
\item[(\rm{c})] 
 $N_{3,\lambda}=\begin{pmatrix}
1&x_1&x_2&x_3\\
0&1&0&x_1+ x_2\\
0&0&1& \lambda x_2\\
0&0&0&1
\end{pmatrix}$, $\lambda \in \F^\ast$, if $k=1$ and $d(U)=3$.
\end{itemize}
 The algebras $\A_1=\F I_4 +N_1$, $\A_2=\F I_4 +N_2$ and $\A_{3,\lambda}=\F I_4 +N_{3,\lambda}$
 have the following 
presentation:
$$
\begin{array}{lcl}
\A_1=\sp_{\F}(t_1,t_2),\quad &\textrm{ where } & {t_1}^2={t_2}^2=t_1t_2=0;\\
\A_2=\sp_{\F}(t_1,t_2),\quad & \textrm{ where }  & {t_1}^2={t_2}^2=t_1t_2+t_2t_1=0;\\
\A_{3,\lambda}=\sp_{\F}(t_1,t_2),\quad & \textrm{ where } &  {t_2}^2-\lambda{t_1}^2=t_2t_1={t_1}^2-t_1t_2=0,\quad \lambda 
\in \F^\ast.
\end{array}
$$
\end{lemma}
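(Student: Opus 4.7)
The plan is to use Theorem~\ref{Jordan form} to normalize a central element of $U$, parametrize $\delta_U$ under the resulting constraints, and then split into subcases by the invariants $k(U)$ and $d(U)$ of Section~\ref{param}. By hypothesis there is $1\neq z\in\Z(U)$ with $(z-I_4)^2=0$ and $\rk(z-I_4)=1$, so Theorem~\ref{Jordan form} lets us conjugate $U$ inside $\AGL_3(\F)$ to arrange $z=I_4+E_{1,2}$, i.e.\ $z=\mu_U(v_1)$ with $\delta_U(v_1)=0$. Centrality of $z$ reads $v_1\delta_U(v)=0$ for every $v$, so the first row of $\delta_U(v_i)$ vanishes for $i=2,3$, and one parametrizes
\[ \delta_U(v_2)=\begin{pmatrix}0&0&0\\ a_1&a_2&a_3\\ b_1&b_2&b_3\end{pmatrix},\qquad \delta_U(v_3)=\begin{pmatrix}0&0&0\\ c_1&c_2&c_3\\ d_1&d_2&d_3\end{pmatrix}. \]
The residual conjugation freedom preserving $z$ is, modulo translations, the group of matrices $\diag(1,1,M)$ with $M\in\GL_2(\F)$ acting on $\langle v_2,v_3\rangle$. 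If $U$ is abelian, then $r(U)=r(\Z(U))=1$, and Lemma~\ref{Tn} gives $U=\Tr=S_{(2,1^2)}$.

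Assume $U$ is non-abelian. Equation~\eqref{prodotto} applied to the pairs $(v_i,v_j)$ with $i,j\in\{2,3\}$, combined with unipotency of each $I_3+\delta_U(v)$ granted by Theorem~\ref{lin->unip}(b), produces quadratic relations in the twelve parameters which I would solve case by case. For $k(U)=2$, use the $\GL_2$-freedom to set $\delta_U(v_2)=0$; then \eqref{prodotto} at $(v_3,v_3)$ becomes $\delta_U(v_3)^2=d_3\delta_U(v_3)$, and the nilpotency of $\delta_U(v_3)$ forces $d_3=0$ together with $c_2=0$ and $c_3d_2=0$, after which a permutation and a translation reduce $U$ to $N_1$. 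For $k(U)=1$ with $d(U)=2$, the identity $(\mu_U(v)-I_4)^2=0$ gives $\delta_U(v)^2=0$ and $v\delta_U(v)=0$ for all $v$; expanding the latter on $v_2+v_3$ yields $v_2\delta_U(v_3)+v_3\delta_U(v_2)=0$, so in $\char\F=2$ all $\mu_U(v_i)$ would commute, contradicting non-abelianness; hence $\char\F\neq 2$, and normalization yields $N_2$. For $k(U)=1$ with $d(U)=3$, some $\mu_U(v)$ has minimal polynomial $(t-1)^3$, and using this element to complete the normal form leaves a single essential scalar $\lambda\in\F^\ast$, producing $N_{3,\lambda}$.

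The three families are distinguished by the pair $(k(U),d(U))$ and the parity of $\char\F$; the main obstacle I foresee is distinctness within the parametric family $\{N_{3,\lambda}\}_{\lambda\in\F^\ast}$, which I would handle in the spirit of Lemma~\ref{cosquare}: any $Q\in\widehat{\GL}_3(\F)$ realizing $Q^{-1}N_{3,\lambda}Q=N_{3,\mu}$ must stabilize the line fixed pointwise by both subgroups, and this reduces $Q$ to a block form whose non-translation part acts by congruence on the bilinear data defining the family, forcing $\lambda=\mu$. Finally, the algebra presentations of $\A_1,\A_2,\A_{3,\lambda}$ are verified by direct matrix multiplication with $t_1=X_1,t_2=X_2$ in the notation~\eqref{Ubreve}, checking the listed relations and confirming that the quotient of $\F\langle t_1,t_2\rangle$ has dimension~$4$.
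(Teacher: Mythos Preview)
Your overall strategy---normalize a central element to $z=\diag(J_2,J_1,J_1)$, deduce that every $\delta_U(v)$ has vanishing first row, and then split the non-abelian case by the invariants $k(U)$ and $d(U)$---matches the paper's. The paper, however, inserts a crucial intermediate step that you skip: using the $\GL_2$-action on $\langle v_2,v_3\rangle$ together with unipotency and repeated application of \eqref{prodotto}, it shows that (after conjugation) the lower-right $2\times 2$ block of each $\delta_U(v_i)$ also vanishes, so that $U$ is one of the four-parameter groups $V(\alpha_2,\alpha_3,\beta_2,\beta_3)$ defined just before Lemma~\ref{cosquare}. All three normal forms $N_1,N_2,N_{3,\lambda}$ are then extracted from this family by explicit algebra isomorphisms and Proposition~\ref{prop:algebre}, with the distinctness of the $N_{3,\lambda}$ coming directly from Lemma~\ref{cosquare} applied to $V(1,1,0,\lambda)$.

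Your case-by-case sketch would need that same reduction to go through, and one concrete step fails as written. In the $k(U)=2$ case, after arranging $\delta_U(v_1)=\delta_U(v_2)=0$ you obtain (using \eqref{prodotto} also at $(v_2,v_3)$, which you omit) $\delta_U(v_3)=c_1E_{2,1}+d_1E_{3,1}$ with $c_1\neq 0$; but a permutation and a translation do \emph{not} reduce this to $N_1$ when $d_1\neq 0$: translations leave the $\delta$-values unchanged, and the permutation $(1\,3)$ sends $\delta_U(v_3)$ to $c_1E_{2,3}+d_1E_{1,3}$, whose $E_{1,3}$ term persists. One needs a genuine linear change of coordinates (for instance $t_1\mapsto -\tfrac{d_1}{c_1}X_2+X_3$, $t_2\mapsto X_2$, as the paper does), not merely a permutation. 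The reductions you announce for the $k(U)=1$ cases are similarly schematic and would benefit from passing through $V(\alpha_2,\alpha_3,\beta_2,\beta_3)$ first; your identification of a cosquare-type argument for the $N_{3,\lambda}$ family and the verification of the presentations via $t_i=X_i$ are both correct.
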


\begin{proof}
If $U$ is abelian, then $r(U)=r(\Z(U))=1$ and by Lemma \ref{Tn}, $U=S_{(2,1^2)}$.
So, suppose that $U$ is not abelian. We may assume $z=\blockdiag(J_2,J_1,J_1)\in \Z(U)$. Now $z=\mu(v_1)$ 
gives $\delta(v_1)=0$ and, from \eqref{vivj},  we get that the first row of $\delta(v)$ is zero for all $v\in\F^3$. 
Hence
$\delta(v)=\left(\begin{smallmatrix} 0 & 0 & 0 \\ \alpha &  y_1 & y_2 \\ \beta & y_3 & -y_1                             
\end{smallmatrix}\right)$ with $y_1^2+y_2y_3=0$, by the unipotency of $U$. 
Now fix $v=(0,x_2,x_3)\neq 0$ in $\langle v_2,v_3\rangle$. Conjugating by $g=\diag(I_2,P)$ with a suitable $P\in 
\GL_2(\F)$, we may assume either (\emph{i}) $y_1=y_2=0$ and $y_3=1$ or (\emph{ii}) $y_1=y_2=y_3=0$. 

In case (\emph{i}) if $x_3=0$, we may suppose $v=v_2$. So $\delta(v_2)=\left(\begin{smallmatrix} 0 & 0 & 0 \\ \alpha_2 
& 0 & 0 \\ \beta_2 & 1 & 0    \end{smallmatrix}\right)$ and $\delta(v_3)=\left(\begin{smallmatrix} 0 & 0 & 0 \\ 
\alpha_3 
& \gamma_1 & \gamma_2 \\ \beta_3 & \gamma_3 & -\gamma_1   \end{smallmatrix}\right)$.
Applying \eqref{prodotto} to $v_2,v_2$ and to $v_3,v_2$ we obtain respectively
$\alpha_2=0$ and $\gamma_1=-1$, $\gamma_2=0$, which contradicts the unipotency of $U$.
On the other hand, if $x_3\neq 0$, conjugating by  
$\diag\left(I_2,\left(\begin{smallmatrix} x_3^{-1} & 0 \\ -x_2x_3^{-2} & x_3^{-1}\end{smallmatrix}\right)\right)$ we 
may 
assume $v=v_3$, hence
$\delta(v_3)=\left(\begin{smallmatrix} 0 & 0 & 0 \\ \alpha_3 & 0 & 0 \\ \beta_3 & 1 & 0  
\end{smallmatrix}\right)$.
Applying \eqref{prodotto} to $v_3,v_3$ we obtain $\delta(v_2)=\delta(v_3)^2=\left(\begin{smallmatrix} 0 & 0 & 0 \\ 0 & 
0 
& 0 \\ \alpha_3& 0 & 0                                                                                                  
 \end{smallmatrix}\right)$. However, in this case $U$ is abelian.

In case (\emph{ii}), up to a further conjugation by a matrix of the same shape of $g$, we may suppose $v=v_2$, hence
$\delta(v_2)=\left(\begin{smallmatrix} 0 & 0 & 0 \\ \alpha_2 & 0 & 0 \\ \beta_2 & 0 & 0    \end{smallmatrix}\right)$.
Set $\delta(v_3)=\left(\begin{smallmatrix} 0 & 0 & 0 \\ \alpha_3 & \gamma_1 & \gamma_2 \\ \beta_3 & \gamma_3 & 
-\gamma_1 
\end{smallmatrix}\right)$. Now,  \eqref{prodotto} applied to $v_2,v_3$ 
gives $\gamma_1\delta(v_2)+\gamma_2\delta(v_3)=\delta(v_2)\delta(v_3)=0$. In particular, $\gamma_2^2=0$, whence 
$\gamma_2=0$.
It follows $\gamma_1=0$ by the condition $\gamma_1^2+\gamma_2\gamma_3=0$.  
Replacing, if necessary, $v_3$ by a scalar multiple, we get either $\gamma_3=1$ or $\gamma_3=0$.
In the first case, \eqref{prodotto} applied to $v_3,v_3$ gives $\alpha_3 =\beta_2$, $\alpha_2=0$  and
$U$ is abelian. In the second one, $U$ is conjugate 
$V(\alpha_2,\alpha_3,\beta_2,\beta_3)$  
with $\alpha_3\neq \beta_2$, from the non-abelianity.

Let $\Delta=\alpha_2\beta_3-\alpha_3\beta_2$, with $\alpha_3\neq \beta_2$.
If $\Delta=0$, then $k(U)=2$.
An isomorphism $\Psi: \A_1\to \A=\F I_4+V(\alpha_2,\alpha_3,\beta_2,\beta_3)$ is obtained by setting
$$
\begin{array}{rl}
\textrm{if } \alpha_2\neq 0 \textrm{ take} &
\left\{\begin{array}{l}
\Psi(t_1)=\beta_2 X_2-\alpha_2 X_3\\
\Psi(t_2)=\alpha_3 X_2-\alpha_2 X_3
\end{array}\right. ;\\[10pt]
\textrm{if } \alpha_2=\beta_2=0 \textrm{ take} &
\left\{\begin{array}{l}
\Psi(t_1)=-\frac{\beta_3}{\alpha_3}  X_2+ X_3\\
\Psi(t_2)=X_2
\end{array}\right. ;\\[10pt]
\textrm{if } \alpha_2=\alpha_3=0 \textrm{ take} &
\left\{\begin{array}{l}
\Psi(t_1)=X_2\\
\Psi(t_2)=-\frac{\beta_3}{\beta_2}  X_2+ X_3
\end{array}\right. .
\end{array}
$$
By  Proposition \ref{prop:algebre}, the subgroup $U$ is conjugate to $N_1$. 

Suppose now that $\Delta\neq 0$ (which implies $k(U)=1$).
If $d(U)=2$, then $\char \F\neq 2$, $\alpha_2=\beta_3=0$ and $\beta_2=-\alpha_3\neq 0$. 
An isomorphism $\Psi: \A_2\to \A=\F I_4+V(\alpha_2,\alpha_3,\beta_2,\beta_3)$ is obtained by 
setting 
$$\Psi(t_1)=X_2,\quad \Psi(t_2)=X_3.$$ By  Proposition \ref{prop:algebre}, the subgroup $U$ is conjugate to 
$N_2$.

If $d(U)=3$, an isomorphism $\Psi: \A_{3,\lambda}\to \A=\F I_4+V(\alpha_2,\alpha_3,\beta_2,\beta_3)$ is obtained by setting $\lambda=\frac{\alpha_2\beta_3-\alpha_3\beta_2}{(\alpha_3-\beta_2)^2}$ and
$$
\begin{array}{rl}
\textrm{if } \beta_3\neq 0 \textrm{ take} &
\left\{\begin{array}{l}
\Psi(t_1)=X_3\\
\Psi(t_2)=-\frac{\beta_3}{\alpha_3-\beta_2} X_2+\frac{\alpha_3}{\alpha_3-\beta_2} X_3\\
\end{array}\right. ;\\[10pt]
\textrm{if } \beta_3=0, \alpha_2\neq 0 \textrm{ take} &
\left\{\begin{array}{l}
\Psi(t_1)=X_2\\
\Psi(t_2)=-\frac{\beta_2}{\alpha_3-\beta_2}X_2+\frac{\alpha_2}{\alpha_3-\beta_2}X_3\\
\end{array}\right. ;\\[10pt]
 \textrm{if } \alpha_2=\beta_3=0\; (\alpha_3\neq -\beta_2)
\textrm{ take} &
\left\{\begin{array}{l}
\Psi(t_1)=X_2+ \frac{\alpha_3-\beta_2}{\alpha_3} X_3\\
\Psi(t_2)=-\frac{\beta_2}{\alpha_3-\beta_2} X_2+ X_3\\
\end{array}\right. .
\end{array}
$$
By  Proposition \ref{prop:algebre}, the subgroup $V(\alpha_2,\alpha_3,\beta_2,\beta_3)$ is conjugate to 
$N_{3,\lambda}$, $\lambda\neq 0$. The statement now follows from Lemma \ref{cosquare}
\end{proof}

We can now classify the regular subgroups $U$ of $\AGL_3(\F)$ having linear $\delta$, including the non-abelian ones, arising from $d(\Z(U))=2$ and $r(\Z(U))=1$.

If $d(\Z(U))=4$, then $U$ is conjugate to $S_{(4)}$, by Lemma \ref{Cn}. 
If $d(\Z(U))=3$, then  $U$ is one of the subgroups 
$R_\alpha$ described in Lemma \ref{Cn-1}: namely, for  $k(U)=2$, $U$ is conjugate to $R_0$ 
and, for $k(U)=1$, $U$ is conjugate to $R_\lambda$, where $\lambda$ can be chosen in a transversal $\F^\square$ of $(\F^\ast)^2$ in $\F^\ast$. As observed in  Remark \ref{rem01}, $R_0$ 
and $R_1$ coincide, respectively, with  $S_{(3,1)}$ and $S_{(2,2)}^\s$. 
If $d(\Z(U))=2$ we have two possibilities. When $r(\Z(U))=2$ we apply  Lemma \ref{J2J2}, that gives
$\char \F=2$ and $U$ is conjugate to $U^3_1$. When $r(\Z(U))=1$ we apply Lemma \ref{J2:3}: when abelian
$U=S_{(2,1^2)}$, otherwise $U$ is conjugate to one of the subgroups $N_1,N_2, N_{3,\lambda}, \lambda \in \F^\ast$. 

A complete set of  representatives of the abelian regular subgroups 
of 
$\AGL_3(\F)$  is given in Table 
\ref{Tab5}. 
The conjugacy classes of non-abelian regular subgroups of $\AGL_3(\F)$ are described in Lemma 
\ref{J2:3}.

\begin{table}[!ht]
\begin{tabular}{cccccc}
$U$ & $\F I_{4} +  U$ & $\char \F$ & $\Psi$ & $\Ker(\Psi)$\\\hline 
\noalign{\smallskip}
$S_{(4)}$ & $\begin{pmatrix}
x_0&x_1&x_2&x_3\\
0&x_0&x_1&x_2\\
0&0&x_0&x_1\\
0&0&0&x_0
\end{pmatrix}$ & any & \eqref{Psilam}  & $\langle {t_1}^4\rangle$ & indec.\\ 
\noalign{\smallskip}
 $S_{(3,1)}$ & $\begin{pmatrix}
x_0&x_1&x_2&x_3\\
0&x_0&x_1&0\\
0&0&x_0&0\\
0&0&0&x_0
\end{pmatrix}$ & any & \eqref{eq:U0} &  $\langle {t_1}^3, {t_2}^2,  t_1 t_2\rangle$ 
\\\noalign{\smallskip}
$\begin{array}{c}
R_\lambda, \;\lambda \in \F^\square\\[-5pt]
{}_{(R_1=S_{(2,2)}^\s)}
 \end{array}$
& $\begin{pmatrix}
x_0&x_1&x_2&x_3\\
0&x_0& 0 & x_1 \\
0&0&x_0& \lambda x_2\\
0&0& 0 &x_0
\end{pmatrix}$ & any &  \eqref{eq:Ualpha odd} & $\langle {t_1}^2 -\lambda {t_2}^2, t_1 t_2\rangle$ & 
indec.
\\\noalign{\smallskip}
 $U^3_{1}$ & $\begin{pmatrix}
x_0&x_1&x_2&x_3\\
0&x_0&0&x_2\\
0&0&x_0&x_1\\
0&0&0&x_0
\end{pmatrix}$ & $2$ & \eqref{eq:22} & 
$\langle {t_1}^2, {t_2}^2\rangle$ & indec.\\ 
\noalign{\smallskip}
 $S_{(2,1^2)}$ & $\begin{pmatrix}
x_0&x_1&x_2&x_3\\
0&x_0&0&0\\
0&0&x_0&0\\
0&0&0&x_0
\end{pmatrix}$ & any & \eqref{Psilam} & $\langle t_1, t_2, t_3\rangle ^2$ \\
\noalign{\smallskip}
 \end{tabular}
\caption{Representatives for the conjugacy classes of abelian regular subgroups $U$ of $\AGL_3(\F)$, for any field $\F$.}\label{Tab5}
\end{table}

\subsection{Case $n=4$} 

Once again, to obtain the full classification of the abelian regular subgroups of $\AGL_4(\F)$ we need some preliminary 
results. 

\begin{lemma}\label{J3J2} 
Let $U$ be a regular subgroup of $\AGL_4(\F)$ such that $\delta$ is linear.
If $d(\Z(U))=r(\Z(U))=3$, then $U$ is abelian and  is conjugate to
$$R(\alpha,\beta)=\begin{pmatrix}
1&x_1&x_2&x_3&x_4\\
0&1&x_1& 0 & x_3 \\
0&0&1& 0 & 0\\
0&0&\beta x_3 &1 & x_1+\alpha x_3\\
0&0&0&0&1
\end{pmatrix},\qquad \alpha,\beta\in \F.$$
Furthermore, if $\F$ has no quadratic extensions, there are exactly two conjugacy classes of such subgroups, whose 
representatives are, for instance, $R(0,0)$ and $R(1,0)$, which is conjugate to $S_{(3,2)}$.
Finally, $U_1^4=R(0,0)$ is indecomposable.
\end{lemma}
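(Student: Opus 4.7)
The plan is to (i) put a non-trivial central element in Jordan form, (ii) derive the shape of $\delta$ from centralizer, commutation, and nilpotency constraints, (iii) normalize to $R(\alpha,\beta)$ and analyze conjugacy classes, and (iv) establish indecomposability of $R(0,0)$. From $d(\Z(U))=r(\Z(U))=3$, a non-trivial $z\in\Z(U)$ must have Jordan type $(3,2)$: this is the unique way to obtain a degree-$3$ minimal polynomial and total rank $3$ in dimension $5$. By Theorem \ref{Jordan form} we may assume $z=\blockdiag(J_3,J_2)=\mu_U(v_1)$, so $\delta(v_1)=E_{1,2}+E_{3,4}$ and $z_0=\blockdiag(J_2,J_2)$.

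\textbf{Shape of $\delta$ and abelianness.} Each $\delta(v_i)$ lies in $\C_{\Mat_4}(z_0)$ by Lemma \ref{centralizzatore}. Applying \eqref{vivj} with $v_1$ determines the first row of $\delta(v_j)$ for $j\ge 2$; applying it between the remaining basis vectors forces $\delta(v_2)=0$ and couples the entries of $\delta(v_3)$ with those of $\delta(v_4)$. Demanding nilpotency of $\tau(v_3)$ then eliminates a diagonal parameter. Since \eqref{vivj} now holds on all basis pairs, bilinearity yields $x\delta(y)=y\delta(x)$ for all $x,y$, so $U$ is abelian.

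\textbf{Normalization and identification.} A conjugation by an appropriate element of $\widehat{\GL}_4(\F)\cap\C(z)$ absorbs the residual parameters of $\delta(v_3),\delta(v_4)$ and places $U$ in the form $R(\alpha,\beta)$. Over a field with no quadratic extensions, Proposition \ref{prop:algebre} combined with an explicit rescaling of $v_3$ (using availability of square roots) collapses all classes with $(\alpha,\beta)\ne(0,0)$ into one represented by $R(1,0)$, while $R(0,0)$ stands apart, distinguished e.g.\ by $k(R(0,0))=2$ versus $k(R(1,0))=1$. The identification $R(1,0)\cong S_{(3,2)}$ then follows by exhibiting an explicit algebra isomorphism $\F I_5+R(1,0)\to\A_{(3,2)}$ matching the presentation \eqref{presUlam}.

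\textbf{Indecomposability of $R(0,0)$ and the main obstacle.} Since $k(R(0,0))=2$, Lemma \ref{indecomposable} is inconclusive, so I would instead compute the $\pi(U)$-invariant subspace lattice directly. A dual calculation shows that every $3$-dimensional invariant subspace has the form $\{w:aw_1+bw_3=0\}$ and so contains the fixed subspace $\langle v_2,v_4\rangle$; the $2$-dimensional invariants are either $\langle v_2,v_4\rangle$ itself or of the shape $\langle v_4,(0,\ast,\ast,0)\rangle$, each containing $v_4$. Thus any pair of proper non-trivial invariants meets non-trivially, and $\F^4$ admits no direct-sum decomposition into invariants. The main obstacle is step (iii): producing a conjugating matrix that simultaneously centralizes $z$ and absorbs the redundant parameters of $\delta$, and then executing the squares/non-squares analysis that leaves only the two conjugacy classes $R(0,0)$ and $R(1,0)$ over $\F$.
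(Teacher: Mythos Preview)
Your steps (i) and (iv) are fine; in particular the invariant-subspace analysis for indecomposability of $R(0,0)$ is correct and more explicit than the paper's ``direct computation''. In step (ii) there is a circularity: you invoke \eqref{vivj} ``between the remaining basis vectors'' to constrain $\delta(v_2),\delta(v_3),\delta(v_4)$, but \eqref{vivj} is precisely the commutation condition you are trying to establish. The paper avoids this by using \eqref{prodotto} (which holds for any regular subgroup with additive $\delta$) to compute $\delta(v_4)=\delta(v_3)\delta(v_1)$, and only then verifies abelianness. Also, eliminating the parameter $\gamma$ requires the degree bound $d(U)=3$ (via $(\mu(v_3)-I_5)^3=0$), not mere nilpotency.

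The substantive error is in step (iii). First, the invariant $k$ does not separate the two classes: for every $(\alpha,\beta)$ the common fixed space of $\pi(R(\alpha,\beta))$ is $\langle v_2,v_4\rangle$, so $k(R(\alpha,\beta))=2$ always; your claim $k(R(1,0))=1$ is false. Second, the dichotomy is not $(\alpha,\beta)=(0,0)$ versus $(\alpha,\beta)\ne(0,0)$. For example $R(2,-1)$ in characteristic $\ne 2$, or $R(0,\beta)$ with $\beta\ne 0$ in characteristic $2$, are conjugate to $R(0,0)$, not to $R(1,0)$. A simple rescaling of $v_3$ cannot collapse all nonzero $(\alpha,\beta)$ to $(1,0)$. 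The correct separating condition is $\alpha^2+4\beta=0$ versus $\alpha^2+4\beta\ne 0$, and the paper detects it via the algebra structure: it exhibits explicit epimorphisms $\F[t_1,t_2]\to\F I_5+R(\alpha,\beta)$ with kernel $\langle t_1^3,t_2^3,t_1t_2\rangle$ when $\alpha^2+4\beta\ne 0$ (matching $S_{(3,2)}$ via \eqref{presUlam}) and $\langle t_1^3,t_1^2t_2,t_2^2\rangle$ otherwise. The two algebras are distinguished by $\dim\{a\in\J(\A):a^2=0\}$, which equals $2$ in the first case and $3$ in the second.
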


\begin{proof} 
We may suppose that $z=\blockdiag(J_3 ,J_2)\in \Z(U)$.
From $z,z^2\in U$ we obtain 
$\delta(v_1)=E_{1,2}+E_{3,4}$ and $\delta(v_2)=0$.
By Lemma \ref{centralizzatore} an the unipotency of $U$ it follows that 
$\delta(v_3)=E_{1,4}+\alpha E_{3,4}+\beta E_{3,2}+\gamma (E_{3,1}+E_{4,2})$.
Applying \eqref{prodotto} to $v_3,v_1$ we obtain $\delta(v_4)=\delta(v_3)\delta(v_1)=\gamma E_{3,2}$. 
In particular $U$ is abelian. Now $(\mu(v_3)-I_5)^3=0$ gives $\gamma=0$.
We conclude that $U$ is conjugate to $R(\alpha,\beta)$.

Now, assume that $\F$ has no quadratic extensions.
An epimorphism $\Psi: \F[t_1,t_2]\to \F I_{n+1} + R(\alpha,\beta)$ is obtained in the following way.
If $\alpha^2+4\beta\neq 0$,  take
\begin{equation}\label{eq:32-1}
\Psi(t_1) =  (\alpha+\sqrt{\alpha^2+4\beta}) X_1-2 X_3,\quad
\Psi(t_2)  =  (\alpha-\sqrt{\alpha^2+4\beta}) X_1- 2X_3 
\end{equation}
when $\char \F\neq 2$ and take
\begin{equation}\label{eq:32-2}
\Psi(t_1)  =  r X_1+X_3,\quad \Psi(t_2)  =  (r+\alpha) X_1 +X_3,
\end{equation}
when $\char \F=2$ (here $r\in \F$ is such that $r^2+\alpha r+\beta=0$).
In both cases, $\Ker(\Psi)=\langle {t_1}^3, {t_2}^3, t_1t_2 \rangle$. Comparison with the presentation of
$\F I_5+S_{(3,2)}$ given in \eqref{presUlam} shows that $R(\alpha,\beta)$ is conjugate to $S_{(3,2)}$
by Proposition \ref{prop:algebre}. 

If $\alpha^2+4\beta= 0$, take 
\begin{equation}\label{eq:32-3}
\Psi(t_1)  =   X_1,\quad \Psi(t_2)  =  \alpha X_1- 2X_3 
\end{equation}
when $\char\F\neq 2$ and take
\begin{equation}\label{eq:32-4}
\Psi(t_1)  =   X_1,\quad \Psi(t_2)  =  \sqrt{\beta} X_1 + X_3
\end{equation}
when $\char\F= 2$. In both cases $\Ker(\Psi)=\langle {t_1}^3, {t_1}^2 t_2, {t_2}^2\rangle$.

The algebras defined by the two presentations above are not isomorphic, as
the subspaces consisting of elements whose square is zero (namely $\langle {t_1}^2, {t_2}^2\rangle $ in the 
first case, $\langle {t_1}^2, t_2, t_1t_2\rangle $ in the second case) have different dimensions.
By Proposition \ref{prop:algebre}, there are exactly two conjugacy classes of subgroups $R(\alpha,\beta)$, 
depending on the nullity of $\alpha^2+4\beta$.

Noting that $k(U_1^4)=2$, direct computation shows that $U_1^4$ is indecomposable.
\end{proof}

\begin{lemma}\label{J3J1J1} 
Let $U$ be an abelian regular subgroup of $\AGL_4(\F)$.
If $d(U)=3$ and $r(U)=2$, then  
$U$ is conjugate to 
$$R(\alpha,\beta,\gamma)=\begin{pmatrix}
1& x_1& x_2& x_3& x_4\\
0 & 1 & x_1 & 0 & 0\\
0&0&1 & 0 & 0\\
0&0& \alpha x_3+\beta x_4 &1& 0\\
0&0& \beta x_3+\gamma x_4 & 0 &1\\
\end{pmatrix}.$$
If $\beta^2-\alpha\gamma\neq 0$, then $k(U)=1$.
If $\beta^2-\alpha\gamma=0$ with $(\alpha, \beta, \gamma)\neq (0,0,0)$, then $k(U)=2$ .
Finally, if $\alpha=\beta=\gamma=0$, then $k(U)=3$.
Furthermore, assuming that every  element of $\F$ is a square,  there are exactly 
three conjugacy classes of such subgroups. Their representatives are, for instance, 
$$U_2^4=R(0,0,1)\;\; (\textrm{if } k=1),\quad R(0,1,0) \;\; (\textrm{if } 
k=2),\quad R(0,0,0)=S_{(3,1,1)}\;\; (\textrm{if } k=3).$$
Observe that $R(0,1,0)$ is conjugate to $S_{(2,2,1)}^\s$.
\end{lemma}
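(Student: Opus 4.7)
The approach is parallel to Lemma \ref{J3J2}. By Theorem \ref{Jordan form}, after $\AGL_4(\F)$-conjugation we may assume $U$ contains an element $z$ in Jordan canonical form. Since $U$ is abelian we have $d(\Z(U)) = d(U) = 3$, so $z$ has a Jordan block of size $3$ (viewed in $\GL_5(\F)$). The constraint $r(U) = 2$, together with the fact that each non-trivial Jordan block of $z$ contributes at least $1$ to $\rk(z - I_5)$, forces the remaining blocks to all be trivial, giving $z = \blockdiag(J_3, J_1, J_1)$. Reading off $z = \mu_U(v_1)$ gives $\delta(v_1) = E_{1,2}$, and from $z^2 = \mu_U(2v_1 + v_2) \in U$ together with the linearity of $\delta$ we obtain $\delta(v_2) = 0$.

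To determine $\delta(v_3)$ and $\delta(v_4)$, I apply the commutativity relation \eqref{vivj} to $(v_i, v_1)$: since $v_i$ has a zero first coordinate, $v_i E_{1,2} = 0$, so the first row of $\delta(v_i)$ vanishes. Relation \eqref{prodotto} applied to $(v_i, v_1)$ then gives $\delta(v_i) E_{1,2} = 0$, forcing the first column of $\delta(v_i)$ to vanish as well. By Lemma \ref{centralizzatore} applied to $\pi(z) = J_2 \oplus I_2$, the remaining non-zero entries of $\delta(v_3), \delta(v_4)$ lie in rows $3, 4$ and columns $2, 3, 4$. Exploiting the abelianity, relation \eqref{prodotto} for each pair $(v_i, v_j)$ with $i, j \in \{3, 4\}$, the nilpotency of all $\delta(v)$ (from Theorem \ref{lin->unip}(b)), and a conjugation by a suitable element of $\C_{\widehat{\GL}_4(\F)}(z)$ acting on $\langle v_3, v_4 \rangle$, I will eliminate all entries of $\delta(v_3), \delta(v_4)$ except those at positions $(3,2)$ and $(4,2)$. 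The abelian identity $v_3 \delta(v_4) = v_4 \delta(v_3)$ then expresses the symmetry of the coefficient matrix $M = \bigl(\begin{smallmatrix} \alpha & \beta \\ \beta & \gamma \end{smallmatrix}\bigr)$, yielding the normal form $R(\alpha, \beta, \gamma)$.

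The value of $k(U)$ follows immediately: $\delta_U(x_1, x_2, x_3, x_4) = 0$ iff $x_1 = 0$ and $M(x_3, x_4)^{\T} = 0$, whence $k(U) = 3 - \rk(M)$, which gives the three cases according to whether $\beta^2 - \alpha\gamma$ is non-zero, zero with $M \ne 0$, or $M = 0$. By Proposition \ref{prop:algebre}, two subgroups $R(\alpha, \beta, \gamma)$ and $R(\alpha', \beta', \gamma')$ are conjugate iff the associated algebras are isomorphic, and the residual conjugation freedom within $\C_{\widehat{\GL}_4(\F)}(z)$ that mixes $v_3$ and $v_4$ implements the congruence action $M \mapsto P^{\T} M P$ for $P \in \GL_2(\F)$.

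Under the hypothesis that every element of $\F$ is a square, there are exactly three classes of $2 \times 2$ symmetric matrices under $\GL_2(\F)$-congruence: the zero matrix, rank $1$, and rank $2$, with representatives $R(0,0,0) = S_{(3,1,1)}$, $R(0,0,1) = U_2^4$, and $R(0,1,0)$ respectively; the last coincides with $S_{(2,2,1)}^{\s}$ by matching presentations via \eqref{presUmu}. In characteristic $\ne 2$ this is standard since the discriminant lies in $\F^\ast / (\F^\ast)^2 = 1$. The main obstacle is characteristic $2$: ordinary bilinear congruence distinguishes the alternating form from $\diag(1, 1)$, so I must verify that the two would-be rank-$2$ classes actually merge. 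This is done via an explicit algebra isomorphism $\psi \colon \A_{R(1,0,1)} \to \A_{R(0,1,0)}$ defined by $\psi(X_1) = X_1$, $\psi(X_3) = X_1 + X_3$, $\psi(X_4) = X_1 + X_4$, whose crucial computation $(X_1 + X_i)^2 = X_1^2 + X_i^2 = X_2$ uses both $\char \F = 2$ and the pattern of products in $\A_{R(0,1,0)}$; Proposition \ref{prop:algebre} then collapses the two classes into one.
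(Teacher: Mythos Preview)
Your reduction to the normal form $R(\alpha,\beta,\gamma)$ has a genuine gap. After fixing $z=\blockdiag(J_3,J_1,J_1)$ and using the centralizer description together with \eqref{vivj}, you correctly reach
\[
\delta(v_i)=\begin{pmatrix}0&0&0&0\\0&0&0&0\\0&\ast&\xi_i&\eta_i\\0&\ast&\epsilon_i&-\xi_i\end{pmatrix},\qquad i=3,4,
\]
with $\xi_i^2+\eta_i\epsilon_i=0$. You then propose to kill the bottom-right $2\times 2$ block using only abelianity, \eqref{prodotto}, nilpotency, and conjugation inside $\C_{\widehat{\GL}_4(\F)}(z)$. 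These tools are \emph{not} sufficient: the assignment $\delta(v_1)=E_{1,2}$, $\delta(v_2)=\delta(v_4)=0$, $\delta(v_3)=E_{3,4}$ satisfies all of them and defines an abelian regular subgroup containing $z$, yet its bottom-right block is non-zero. This subgroup has $\rk(\mu(v_1+v_3)-I_5)=3$, so it is excluded only by the hypothesis $r(U)=2$, which you invoke solely to pin down the Jordan type of $z$ and never again. The paper's proof uses $r(U)=2$ a second time, precisely to force $\xi_i=\eta_i=\epsilon_i=0$ (via the rank of $\mu(v_1+v_3+v_4)-I_5$); you need an analogous step.

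Two smaller points. First, you compute $k(U)$ as $\dim\Ker\delta_U$; this equals the dimension of the common fixed space of $\pi(U)$ only because $U$ is abelian (via \eqref{vivj}), and that identification should be stated. Second, your congruence-class approach to the final classification is a legitimate alternative to the paper's explicit epimorphisms, and your characteristic-$2$ merger of the two rank-$2$ classes by an explicit algebra isomorphism is neat. Note, however, that $R(0,0,1)$ has $M=\diag(0,1)$ of rank $1$, hence $k=2$, while $R(0,1,0)$ has $M=\bigl(\begin{smallmatrix}0&1\\1&0\end{smallmatrix}\bigr)$ of rank $2$, hence $k=1$; so $U_2^4$ (which is indecomposable, $k=1$) should be matched with $R(0,1,0)$ and $S_{(2,2,1)}^\s$ (decomposable, $k=2$) with $R(0,0,1)$---the labels in the statement are swapped, and your identification via \eqref{presUmu} inherits this.
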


\begin{proof} 
We may assume $z=\blockdiag(J_3 ,J_1, J_1)\in U$.
From $z,z^2\in U$ we obtain 
$\delta(v_1)=E_{1,2}$ and $\delta(v_2)=0$.
By Lemma \ref{centralizzatore} and the unipotency of $U$, for any $v \in \langle v_3,v_4\rangle$ 
we have $\delta(v)=\left(\begin{smallmatrix}
0 & 0 & 0 & 0 \\ 0 & 0 & 0 & 0 \\ 0 & \alpha & \xi & \eta \\
0 & \beta & \epsilon & -\xi \end{smallmatrix}\right)$, where $\xi^2+\eta\epsilon=0$.
Now, $\rk(\mu(v_1+v_3+v_4)-I_5)=3$ implies
$\delta(v_3)=\alpha_3 E_{3,2}+\beta_3 E_{4,2}$ 
and $\delta(v_4)=\alpha_4 E_{3,2}+\beta_4 E_{4,2}$.
Since $U$ is abelian, \eqref{vivj} applied to $v_3,v_4$ implies 
$\alpha_4=\beta_3$. Hence $U$ is conjugate to $R(\alpha,\beta,\gamma)$.

Suppose first  that $\Delta=\beta^2-\alpha\gamma\neq 0$. In this case $k(U)=1$.
If $\char \F \neq 2$ define
\begin{equation}\label{eq:k1-3}
\begin{array}{ll}
\textrm{if }\alpha\neq 0 & \textrm{if }\alpha =0\\\hline\noalign{\smallskip}
\begin{array}{rcl}
\Psi(t_1) & = & X_1,\\
\Psi(t_2) & = & \frac{\beta+\sqrt{\Delta}}{2\Delta} X_3 - 
\frac{\alpha}{2\Delta}X_4,\\[2pt]
\Psi(t_3) & = &\frac{-\beta+\sqrt{\Delta}}{\alpha} X_3+X_4,
\end{array}& 
\begin{array}{rcl}
\Psi(t_1) & = & \beta X_1,\\
\Psi(t_2) & = &  -\frac{\gamma}{2} X_3+ \beta X_4,\\
\Psi(t_3) & = & X_3,
\end{array}
 \end{array}
 \end{equation}
and if $\char \F=2$, define
\begin{equation}\label{eq:k1-2}
 \begin{array}{rcl}
\Psi(t_1) & = & \sqrt[4]{\Delta} X_1+\frac{\sqrt{\gamma}}{\sqrt[4]{\Delta}} 
X_3+\frac{\sqrt{\alpha}}{\sqrt[4]{\Delta}}X_4,
\\
\Psi(t_2) & = & \sqrt{\alpha} X_1+X_3,\\
\Psi(t_3) & = &  \sqrt{\gamma} X_1+X_4.
\end{array}
\end{equation}
We get $\Ker(\Psi)=\langle {t_1}^2-t_2t_3, {t_2}^2, {t_3}^2,t_1t_2, t_1t_3\rangle$.

Next suppose $\beta^2-\alpha\gamma=0$ and $(\alpha,\beta,\gamma)\neq (0,0,0)$.
Then $k(U)=2$. Define
\begin{equation}\label{eq:k2-1}
\begin{array}{ll}
\textrm{ if }\alpha\neq 0 & \textrm{ if }\alpha= 0 \textrm{ and } \gamma\neq 0\\\hline\noalign{\smallskip}
\begin{array}{rcl}
\Psi(t_1) & = & X_1,\\
\Psi(t_2) & = & -\frac{\beta\sqrt{\alpha}+1}{\sqrt{\alpha}} X_3 +\alpha X_4,\\
\Psi(t_3) & = & -\beta X_3+\alpha X_4,
\end{array} &  
\begin{array}{rlc}
\Psi(t_1) & = & \sqrt{\gamma}X_1,\\
\Psi(t_2) & = &  X_4,\\
\Psi(t_3) & = &  X_3.
\end{array}
\end{array}
\end{equation}
In both cases,  $\Ker(\Psi)=\langle {t_1}^2- {t_2}^2, {t_3}^2, t_1t_2, t_1t_3, t_2t_3 \rangle$.
Comparison with the presentation of $\F I_5+S_{(2,2,1)}^\s$ given in \eqref{presUmu} shows
that $U$ is conjugate to $S_{(2,2,1)}^\s$.

Finally, if $\alpha=\beta=\gamma=0$, then $R(0,0,0)=S_{(3,1,1)}$ and $k(U)=3$.
\end{proof}

\begin{rem}\label{remPoo}
Consider the algebra $\A$ of Example \ref{Poon} and the corresponding regular subgroup $R=R(-1,0,-1)$ 
of the previous Lemma. Since $k(R)=1$, under the assumption  that $-1$ is a square in $\F$, we obtain 
that the algebras
$$\frac{\F[t_1,t_2,t_3]}{\langle {t_1}^2+{t_2}^2, {t_1}^2+{t_3}^2, t_1 t_2, t_1t_3, t_2t_3\rangle }
\quad \textrm{ and } \quad
\frac{\F[t_1,t_2,t_3]}{\langle {t_1}^2-t_2t_3, {t_2}^2, {t_3}^2,t_1t_2, t_1t_3\rangle}$$
are isomorphic in any characteristic.  Actually, if $\char \F=2$,
an isomorphism can also be obtained directly via the change of variables $t_1'=t_1+t_2+t_3$, 
$t_2'=t_2+t_3$, $t_3'=t_1+t_3$. 
This fixes an inaccuracy of \cite{P}, corrected in \cite{P2}.
\end{rem}

\begin{lemma}\label{J2J2J1} 
Let $U$ be an abelian regular subgroup of $\AGL_4(\F)$.
If $d(U)=r(U)=2$, then $\char \F=2$ and $U$ is conjugate to  $U_1^3\times S_{(1)}$, where
$U_1^3$ is defined in Table $\ref{Tab5}$. 
\end{lemma}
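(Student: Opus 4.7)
The plan is to first force $\char \F = 2$, then to put $U$ into a concrete normal form by exploiting a central element of maximal rank.

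To see $\char \F = 2$, unpack $d(U) = 2$: the identity $(\mu(v) - I_5)^2 = 0$ for every $v \in \F^4$ splits into $v\,\delta(v) = 0$ and $\delta(v)^2 = 0$. Polarizing the first via bilinearity yields $v\,\delta(w) + w\,\delta(v) = 0$, and combining with the abelian identity $v\,\delta(w) = w\,\delta(v)$ from \eqref{vivj} gives $2 v\,\delta(w) = 0$. In characteristic different from $2$ this forces $\delta \equiv 0$, so $U$ would equal $\Tr$ with $r(U) = 1$, contradicting the hypothesis. Hence $\char \F = 2$.

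For the normal form, pick $z \in \Z(U) = U$ with $\rk(z - I_5) = 2$; since $d(z) \le 2$ and the Jordan blocks of $z$ partition $5$, the Jordan form of $z$ must be $\diag(J_2, J_2, J_1)$. Theorem \ref{Jordan form} then lets us conjugate $U$ inside $\AGL_4(\F)$ so that $z = \mu(v_1) = \diag(J_2, J_2, J_1)$, forcing $\delta(v_1) = E_{2,3}$. For $i = 2,3,4$ the matrix $\delta(v_i)$ commutes with $E_{2,3}$ (abelianness plus Lemma \ref{centralizzatore}), and \eqref{vivj} applied to $(v_1, v_i)$ determines the first row of $\delta(v_i)$ to be $v_i E_{2,3}$, namely $(0,0,1,0)$ for $i = 2$ and zero for $i = 3,4$. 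Imposing $\delta(v_i)^2 = 0$, $v_i\,\delta(v_i) = 0$ and \eqref{prodotto} should pin the remaining entries down to
\[
\delta(v_2) = E_{1,3} + tE_{4,3},\qquad \delta(v_3) = 0,\qquad \delta(v_4) = tE_{2,3},
\]
for some $t \in \F$; the conceptual driver is that $X_1X_2$ must lie in $V = U - I_5$, which forces the ``$\delta$-part'' of $X_1X_2$ to vanish and hence $\delta(v_3) = 0$.

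To finish, conjugate by $\diag(1, P)$ with $P = I_4 + tE_{4,1}$, i.e.\ via the change of basis $v_4 \mapsto v_4 + tv_1$. In characteristic $2$ this sends $\delta(v_4) = tE_{2,3}$ to $\delta(v_4) + t\,\delta(v_1) = 2tE_{2,3} = 0$, and the conjugation rule $\delta_{\mathrm{new}}(w) = P^{-1}\delta(wP^{-1})P$ also kills the $tE_{4,3}$ term in $\delta(v_2)$ (again by $2 = 0$), while $\delta(v_1)$ and $\delta(v_3)$ remain unchanged. Writing out the general element with $\delta(v_1) = E_{2,3}$, $\delta(v_2) = E_{1,3}$, $\delta(v_3) = \delta(v_4) = 0$ exhibits $U$ exactly as $U_1^3 \times S_{(1)}$ inside $\AGL_4(\F)$. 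The main technical obstacle is the bookkeeping in the middle step: the centralizer description, the polarized abelian identity, relation \eqref{prodotto}, and the nilpotency constraints all have to be combined entry by entry, and it is the closure of $\F I_5 + V$ under multiplication that ultimately pins down $\delta(v_3)$.
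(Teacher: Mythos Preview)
Your argument is correct and the overall strategy matches the paper's, but two steps are organised differently.

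First, you establish $\char\F=2$ \emph{before} choosing coordinates, via the polarisation identity $v\delta(w)+w\delta(v)=0$ combined with \eqref{vivj}; the paper instead reads it off after fixing $z=\diag(J_2,J_2,J_1)$ from the single relation $(\mu(v_1+v_2)-I_5)^2=0$. Your route is more conceptual (it shows at once that an abelian regular subgroup with $d=2$ and $r\ge 2$ forces $\char\F=2$ in any dimension), while the paper's is more economical inside this particular proof.

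Second, for the final identification you conjugate explicitly by $\diag(1,I_4+tE_{4,1})$, whereas the paper exhibits the algebra epimorphism $\Psi(t_1)=X_1$, $\Psi(t_2)=X_2$, $\Psi(t_3)=\alpha X_1+X_4$ and invokes Proposition~\ref{prop:algebre}. Both yield the same change of variables $v_4\mapsto v_4+\alpha v_1$. One small inaccuracy in your write-up: the cancellations you attribute to ``$2=0$'' in this conjugation actually hold in any characteristic (e.g.\ $P^{-1}(E_{1,3}+tE_{4,3})P=E_{1,3}$ because $E_{4,1}E_{1,3}=E_{4,3}$, giving $t-t=0$, not $2t=0$); since you have already forced $\char\F=2$ this does not affect correctness, but the reasoning should be adjusted. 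Also, to pin down $\delta(v_4)=tE_{2,3}$ with the \emph{same} $t$ as in $\delta(v_2)$ you need \eqref{vivj} for the pair $(v_2,v_4)$ (equivalently, the polarised $d=2$ identity), not just the diagonal constraints $v_i\delta(v_i)=0$; this is implicit in your earlier discussion but should be named at that point.
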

 
\begin{proof}
We may suppose $z=\diag(J_2,J_2,J_1)\in U$. From $z\in U$ we get $\delta(v_1)=E_{2,3}$.
Lemma \ref{centralizzatore} and the condition $(\mu(v_2)-I_5)^2= 0$ give 
$\delta(v_2)=E_{1,3}+\alpha_2 E_{4,3}$. Now, $(\mu(v_1+v_2)-I_5)^2= 0$ implies $\char \F=2$.
Applying \eqref{prodotto} to $v_2,v_1$ we get $\delta(v_3)=\delta(v_2)\delta(v_1)=0$.
From  $(\mu(v_2+v_4)-I_5)^2= 0$ we obtain $\delta(v_4)=(\alpha_2+\alpha_4)E_{2,3}+\alpha_4 E_{4,3}$, but 
$(\mu(v_4)-I_5)^2= 0$ gives  $\alpha_4=0$.
In conclusion, $U$ is conjugate to 
$$R(\alpha)=\begin{pmatrix}
1&x_1&x_2&x_3&x_4\\
0&1&0& x_2 & 0\\
0&0&1 &x_1+\alpha x_4&0\\
0&0&0&1&0\\
0&0&0&\alpha x_2&1
\end{pmatrix}.$$

An epimorphism $\Psi: \F[t_1,t_2,t_3]\to \F I_{n+1} + R(\alpha)$ is obtained by setting 
\begin{equation}\label{eq:221}
\Psi(t_1)=X_1, \quad \Psi(t_2)=X_2, \quad \Psi(t_3)=\alpha X_1+X_4.
\end{equation}
We have $\Ker(\Psi)=\langle {t_1}^2, {t_2}^2, {t_3}^2, t_1t_3, t_2 t_3\rangle$. Considering the presentation
of $U_1^3$ given in Table $\ref{Tab5}$, we have that $U$ is conjugate to $U_1^3\times S_{(1)}$.
\end{proof}

We now classify the abelian regular subgroups $U$ of $\AGL_4(\F)$.
If $d(U)=5$, then $U$ is conjugate to $S_{(5)}$ by Lemma \ref{Cn}. 
If $d(U)=4$, then by,  Lemma \ref{Cn-1}, $U$ is conjugate to $R_0=S_{(4,1)}$ when  $k(U)=2$,  and  to $R_1=S_{(3,2)}^\s$ when $k(U)=1$.
Suppose  $d(U)=3$. If $r(U)=3$, then $U$ is conjugate  either to $U_1^4$ or to $S_{(3,2)}$ by Lemma \ref{J3J2}.
If $r(U)=2$, then $U$ is conjugate either to $U_2^4$ or to $S_{(2,2,1)}^\s$ or to $S_{(3,1,1)}$ by Lemma \ref{J3J1J1}.
Finally suppose  $d(U)=2$. If $r(U)=1$ then $U$ is conjugate to $S_{(2,1^3)}$ by Lemma \ref{Tn}.
If $r(U)=2$, then $\char\F=2$ and $U$ is conjugate to $U_{1}^3\times S_{(1)}$ by Lemma \ref{J2J2J1}.

When $\F$ has no quadratic  extensions, a  complete set of representatives of the conjugacy classes of abelian regular subgroups is given in Tables \ref{Tab6} and \ref{Tab7}.

\begin{table}[!ht]
\begin{tabular}{cccccc}
$U$ & $\F I_{5}+  U$ & $\char\F$ & $\Psi$ & $\Ker(\Psi)$.
\\\hline
\noalign{\smallskip}
$S_{(5)}$ & $\begin{pmatrix}
x_0&x_1&x_2&x_3&x_4\\
0&x_0&x_1&x_2&x_3\\
0&0&x_0&x_1&x_2\\
0&0&0&x_0&x_1\\
0&0&0&0&x_0
\end{pmatrix}$ & any & \eqref{Psilam} & $\langle {t_1}^5\rangle$ \\\noalign{\smallskip}
$S_{(3,2)}^\s$ & $\begin{pmatrix}
x_0&x_1&x_2&x_3&x_4\\
0&x_0&x_1&0&x_2\\
0&0&x_0&0&x_1\\
0&0&0&x_0&x_3\\
0 & 0  & 0 & 0 &x_0
\end{pmatrix}$ & any & \eqref{Psimu} & $\langle {t_1}^3-{t_2}^2, t_1 t_2\rangle $ \\\noalign{\smallskip}
$U^4_{1}$ & 
$\begin{pmatrix}
x_0&x_1&x_2 & x_3& x_4\\
0&x_0&x_1& 0 & x_3 \\
0&0&x_0&0 &0\\
0&0&0& x_0 &x_1\\
0&0&0&0&x_0
\end{pmatrix}$ & 
$\begin{array}{c}
 \neq 2\\\\ 2
\end{array}$
 &  
$ \begin{array}{c}
\eqref{eq:32-1}\\ \\ \eqref{eq:32-2} 
 \end{array}$
 & $\langle {t_1}^3, {t_1}^2 t_2, {t_2}^2\rangle$ \\
\noalign{\smallskip}
$U^4_{2}$ & $\begin{pmatrix}
x_0&x_1&x_2&x_3&x_4\\
0&x_0& 0&0&x_1\\
0&0&x_0&0&x_3\\
0&0& 0 & x_0&x_2\\
0&0& 0 &0&x_0\\
\end{pmatrix}$ & 
$\begin{array}{c}
 \neq 2\\\\2
\end{array}$
 &  $\begin{array}{c}
 \eqref{eq:k1-3}\\\\  \eqref{eq:k1-2}
  \end{array}$
 & $\langle {t_1}^2-t_2t_3, {t_2}^2, {t_3}^2,   t_1t_2, t_1t_3\rangle$\\
\noalign{\smallskip}
\end{tabular}
\caption{Representatives for the conjugacy classes of indecomposable abelian regular subgroups $U$ of $\AGL_4(\F)$, when $\F$ has no quadratic extensions.}\label{Tab6}
\end{table}

\begin{table}[!ht]
\begin{tabular}{cccccc}
$U$ & $\F I_{5} + U$ & $\char \F$ & $\Psi$ &  $\Ker(\Psi)$
\\\hline
\noalign{\smallskip}
$S_{(4,1)}$ & $\begin{pmatrix}
x_0&x_1&x_2&x_3&x_4\\
0&x_0&x_1&x_2&0\\
0&0&x_0&x_1&0\\
0&0&0&x_0&0\\
0&0&0&0&x_0
\end{pmatrix}$ & any & \eqref{Psilam} & $\langle {t_1}^4,{t_2}^2,t_1t_2 \rangle$ \\\noalign{\smallskip}
$S_{(3,2)}$ & 
$\begin{pmatrix}
x_0&x_1&x_2 & x_3& x_4\\
0&x_0&x_1& 0 &0 \\
0&0&x_0&0 &0\\
0&0& 0& x_0 & x_3\\
0&0&0&0&x_0
\end{pmatrix}$ & any & \eqref{Psimu} & $\langle {t_1}^3,{t_2}^3, t_1t_2 \rangle$  \\\noalign{\smallskip}
$S_{(3,1,1)}$  & $\begin{pmatrix}
x_0&x_1&x_2&x_3&x_4\\
0&x_0& x_1&0&0\\
0&0&x_0& 0&0\\
0&0&0& x_0&0\\
0&0&0&0 &x_0\\
\end{pmatrix}$ & any  & \eqref{Psilam} &
$\langle {t_1}^3,  {t_2}^2, {t_3}^2,  t_1t_2,  t_1t_3, t_2t_3\rangle$  \\\noalign{\smallskip}
 $S_{(2,2,1)}^\s$ & $\begin{pmatrix}
x_0&x_1&x_2&x_3&x_4\\
0&x_0& 0&  x_1&0\\
0&0&x_0&x_2&0\\
0&0& 0 & x_0&0\\
0&0& 0 &0&x_0\\
\end{pmatrix}$ &any & \eqref{Psimu} &
$\langle {t_1}^2-{t_2}^2, {t_3}^2,  t_1t_2, t_1t_3, t_2t_3\rangle$  \\\noalign{\smallskip}
$U_1^3\times S_{(1)}$ & $\begin{pmatrix}
x_0& x_1 & x_2 & x_3& x_4\\
0&x_0&0&x_2& 0\\
0&0&x_0&x_1&0\\
0&0&0&x_0&0\\
0&0&0&0&x_0\\
\end{pmatrix}$ & $2$ & \eqref{eq:221} & 
$\langle  {t_1}^2, {t_2}^2, {t_3}^2, t_1t_3, t_2 t_3 \rangle$ \\ \noalign{\smallskip}
$S_{(2,1^3)}$  & $\begin{pmatrix}
x_0&x_1&x_2&x_3&x_4\\
0&x_0&0&0&0\\
0&0&x_0&0&0\\
0&0&0&x_0&0\\
0&0&0&0&x_0
\end{pmatrix}$ & any & \eqref{Psilam} & $\langle t_1, t_2, t_3,t_4\rangle ^2$ \\
\noalign{\smallskip}
\end{tabular}
\caption{Representatives for the conjugacy  classes of decomposable abelian regular subgroups $U$ of $\AGL_4(\F)$, when $\F$ has no quadratic extension.}\label{Tab7}
\end{table}

\subsection*{Acknowledgments} 
We are grateful to Marco Degiovanni for a very useful suggestion, which led to  Example 
\ref{degio}.

\end{document}